\newtheorem{theorem}{Theorem}[section]
\newtheorem{proposition}[theorem]{Proposition}
\newtheorem{lemma}[theorem]{Lemma}
\theoremstyle{definition}
\newtheorem{definition}[theorem]{Definition}
\newtheorem{remark}[theorem]{Remark}
\newtheorem{Conjecture}[theorem]{Conjecture}
\numberwithin{equation}{section}
\numberwithin{theorem}{section}
\numberwithin{equation}{section}
\begin{document}
\title[On almost-prime $k$-tuples]{On almost-prime $k$-tuples}

\author[Bin Chen]{Bin Chen}
\thanks{B. Chen gratefully acknowledges support by the China Scholarship Council (CSC)}
\address{B. Chen\\ Department of Mathematics: Analysis, Logic and Discrete Mathematics\\ Ghent University\\ Krijgslaan 281\\ B 9000 Ghent\\ Belgium}
\email{bin.chen@UGent.be}
\subjclass[2020]{11N05, 11N35, 11N36.}
\keywords{Selberg sieve, smooth moduli, almost-prime $k$-tuples}

\begin{abstract} Let $\tau$ denote the divisor function  and $\mathcal{H}=\{h_{1},...,h_{k}\}$ be an admissible set. We prove that there are infinitely many $n$ for which the product $\prod_{i=1}^{k}(n+h_{i})$ is square-free and $\sum_{i=1}^{k}\tau(n+h_{i})\leq \lfloor \rho_{k}\rfloor$, where $\rho_{k}$ is asymptotic to $\frac{2126}{2853} k^{2}$. It improves a previous result of M. Ram Murty and A. Vatwani, replacing $2126/2853$ by $3/4$. The main ingredients in our proof are the higher rank Selberg sieve and  Irving-Wu-Xi estimate for the divisor function in arithmetic progressions to smooth moduli.
\end{abstract}

\maketitle

\section{Introduction}
We consider a set $\mathcal{H}=\{h_{1},...,h_{k}\}$ of distinct non-negative integers. We call such a set is admissible if, for every prime $p$, the number of distinct residue classes modulo $p$ occupied by $h_{i}$ is less than $p$. The following conjecture is one of the greatest open problems in prime number theory.

\begin{Conjecture} [Prime $k$-tuples conjecture] \label{KTC} Given an admissible set $\mathcal{H}=\{h_{1},...,h_{k}\}$, there are infinitely many integers $n$ for which all $n+ h_{i}$ are prime.
\end{Conjecture}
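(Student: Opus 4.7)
The plan is to attack Conjecture \ref{KTC} via a sieve-theoretic detection of $k$ simultaneous primes in the spirit of Goldston-Pintz-Y{\i}ld{\i}r{\i}m and Maynard-Tao. The natural weighted sum to consider is
\begin{equation*}
S(N) = \sum_{N \le n \le 2N} \Bigl(\sum_{i=1}^{k} \mathbf{1}_{\mathbb{P}}(n+h_i) - (k-1)\Bigr) w(n),
\end{equation*}
where $w(n) = \bigl(\sum_{d_i \mid n+h_i}\lambda_{d_1,\ldots,d_k}\bigr)^{2}$ is the square of a higher-rank Selberg-type divisor sum, with the $\lambda$'s supported on tuples satisfying $d_1\cdots d_k \le R$ for some $R = N^{\theta/2-\varepsilon}$. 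Showing $S(N) > 0$ for arbitrarily large $N$ forces the bracket to be positive at some $n$, yielding all $k$ shifts $n+h_i$ simultaneously prime.

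First I would carry out the local analysis: by multiplicativity and Perron-type integral representations, the main sum $\sum_n w(n)$ reduces to a $k$-fold integral of a smooth test function $F$ encoding $\lambda$, weighted by the singular series of $\mathcal{H}$. The shifted prime sums $\sum_n \mathbf{1}_{\mathbb{P}}(n+h_i) w(n)$ expand via the inner square into sums of primes in arithmetic progressions to moduli up to $R^{2}$; Bombieri-Vinogradov controls moduli below $N^{1/2-\varepsilon}$, and I would layer in the Irving-Wu-Xi equidistribution for smooth moduli, together with the higher-rank machinery underlying the present paper, to stretch the effective level of distribution $\theta$ as far above $1/2$ as possible. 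One then selects $F$ by a variational optimization to maximize the ratio of the prime-count contribution to the main term, aiming to exceed the threshold $k-1$.

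The hard part will be pushing the variational problem past the parity barrier of Selberg: a pure sieve cannot distinguish integers with an even from an odd number of prime factors, so the Maynard-Tao framework only yields bounded gaps, not simultaneous primality at $k$ prescribed shifts. Breaking through would require a genuinely new analytic input --- for instance, unconditional levels of distribution $\theta > 1$, or bilinear cancellation for the M\"obius function beyond what current technology affords. Absent such an input, the same variational method still delivers sharp information on the joint factorization of the $n+h_i$, and it is precisely the weakened formulation in which the primality indicator is replaced by a bound on $\sum_{i} \tau(n+h_i)$ that becomes provable --- the direction taken in the main theorem of this paper.
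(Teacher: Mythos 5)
There is nothing to compare your proposal against: the statement you were asked about is Conjecture~\ref{KTC}, which the paper does not prove and does not claim to prove --- it is stated explicitly as an open problem whose weakened divisor-sum form motivates Theorem~\ref{NewTh}. Your sketch, as you yourself concede in its final paragraph, is therefore not a proof of the statement, and the concrete step that fails is the variational one: you propose to choose $F$ so that the ratio of the prime-detecting terms to the main term exceeds the threshold $k-1$, but for Selberg-type weights supported on $d_1\cdots d_k\le R$ with $R=N^{\theta/2-\varepsilon}$ the achievable ratio is governed by the Maynard--Tao quantity $M_k$, which grows only like $\log k$. Thus even with the Irving--Wu--Xi input for smooth moduli (which moves the effective level of distribution from $2/3$ to $2/3+55/12756$ in the divisor setting, and in the prime setting cannot push $\theta$ anywhere near the size needed), one detects only $\gg \log k$ primes among the $k$ shifts, never all $k$; under the full Elliott--Halberstam conjecture the count improves by a constant factor, and the parity obstruction blocks any pure sieve of this type from certifying simultaneous primality of all shifts even for $k=2$.

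That said, your diagnosis is accurate and your closing observation matches the logic of the paper: since the primality indicator is out of reach, one replaces it by the bound $\sum_i \tau(n+h_i)\le\lfloor\rho_k\rfloor$, and it is exactly this weighted comparison $\rho S_1 - S_2 > 0$ (with the higher rank Selberg sieve with additive twist, the smoothed weights of \eqref{test function form}, and Lemma~\ref{12WuXiTH1.2} to handle the error terms) that the paper carries out to prove Theorem~\ref{NewTh} with $\rho_k\sim\frac{2126}{2853}k^2$. So the correct conclusion is not that you chose the wrong route, but that no route is currently available for the conjecture itself; a submission addressing this statement should say so rather than present a sieve outline that cannot close.
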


The twin prime conjecture follows immediately from this by taking $\mathcal{H}=\{0, 2\}$. Although the prime $k$-tuples conjecture for $k \geq 2$ is still wide open, many mathematicians succeeded in making partial progress in various directions. One of these directions is the existence of small gaps between primes. In 2013, Zhang \cite{Z2014} showed
$$
\liminf_{n \rightarrow \infty} (p_{n+1} - p_{n}) < 7 \times 10^{7}
$$
by a refinement of the GPY method. The main ingredient of his proof is a stronger version of the Bombieri-Vinogradov theorem that is applicable when the moduli are smooth numbers. After Zhang's breakthrough, a new higher rank version of the Selberg sieve was developed by Maynard \cite{J2015} and Tao. This provided an alternative way of proving bounded gaps between primes, but had several other consequences as well since it was more flexible and could show the existence of clumps of many primes in intervals of bounded length (cf. {\cite[Theorem 1.1]{J2015}}). It is worth mentioning that Zhang's stronger version of the Bombieri-Vinogradov theorem with smooth moduli can be combined with the Maynard-Tao sieve to show there are clumps of primes in shorter intervals bounded length (cf. {\cite[Theorem 4(vi)]{PM2014}}). This means that a combination of both methods will yield better results than using Maynard-Tao sieve alone. For a further discussion of the progress in this direction, we refer the reader to \cite{PM2014}.

Another approximation to the prime $k$-tuples conjecture is to establish an upper bound for the expression
$$
\sum_{i=1}^{k}\tau(n+h_{i}),
$$
where $\tau$ stands for the divisor function. It is clear that the prime $k$-tuples conjecture follows if one has the upper bound $2k$ for infinitely many $n$. For large $k$, the current best result is

\begin{theorem}  [M. Ram Murty and A. Vatwani \cite{M-A2017}] \label{ThMV}There exists $\rho_{k}$ such that there are $\gg x(\log \log x)^{-1}(\log x)^{-k}$ integers $n\leq x$ for which the product $\prod_{i=1}^{k}(n+h_{i})$ is square-free and
$$\sum_{i=1}^{k}\tau(n+h_{i})\leq \lfloor \rho_{k}\rfloor.$$
For large k, we have $\rho_{k}\sim \frac{3}{4}k^{2}.$
\end{theorem}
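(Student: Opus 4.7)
The plan is to produce $n$ with $\sum_i\tau(n+h_i)\leq\lfloor\rho_k\rfloor$ by a positivity argument applied to a carefully weighted sum. After the standard $W$-trick (fix $W=\prod_{p\leq D_0}p$ with $D_0$ slowly growing, and a residue $\nu_0$ with $(\nu_0+h_i,W)=1$ for all $i$), I would study
$$
S\;=\;\sum_{\substack{x<n\leq 2x\\ n\equiv\nu_0(\bmod W)}}\!\Bigl(\rho_k-\sum_{i=1}^{k}\tau(n+h_i)\Bigr)\,w(n)\,\mu^{2}\!\Bigl(\prod_{i=1}^{k}(n+h_i)\Bigr),
$$
where $w(n)\geq 0$ is a Selberg-type weight. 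If $S>0$, some $n$ in the range contributes positively, which forces $\prod_i(n+h_i)$ to be squarefree and $\sum_i\tau(n+h_i)<\rho_k$; the quantitative count $\gg x(\log\log x)^{-1}(\log x)^{-k}$ is then extracted by comparing $S$ to the pointwise bound $w(n)\ll(\log x)^{k}$ and the $W$-trick factor $\varphi(W)/W\asymp1/\log\log x$.

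For the weight I would take the higher-rank Maynard--Tao construction
$$
w(n)\;=\;\Bigl(\sum_{\substack{d_{i}\mid n+h_{i}\\ \forall i}}\lambda_{d_{1},\ldots,d_{k}}\Bigr)^{\!2},
$$
with $\lambda_{\mathbf{d}}$ supported on tuples coprime to $W$ and on $\prod_i d_i<R=x^{\theta/2-\varepsilon}$, where $\theta<1/2$ is an admissible level of distribution for $\tau$ in arithmetic progressions. Following Maynard, the $\lambda_{\mathbf{d}}$ are parametrized via a fixed smooth symmetric $F$ supported on the simplex $\{t_i\geq 0,\sum t_i\leq 1\}$, so that all asymptotics reduce to quadratic functionals of $F$.

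The technical core is the evaluation of
$$
S_1=\sum_{n}\mu^{2}\!\Bigl(\prod_i(n+h_i)\Bigr)\,w(n),\qquad S_2=\sum_{i=1}^{k}\sum_{n}\tau(n+h_i)\,\mu^{2}\!\Bigl(\prod_j(n+h_j)\Bigr)\,w(n),
$$
both carrying the $W$-trick restriction. For $S_1$ one expands $\mu^{2}=\sum_{a^{2}\mid\cdot}\mu(a)$; the squarefreeness indicator contributes only a convergent singular series, and the rest is the Maynard calculation giving $S_1\sim c_k\,x(\log R)^{k} I_k(F)/W$, where $I_k(F)$ is the usual Maynard quadratic form. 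For $S_2$, after opening the square one needs averages of $\tau$ over arithmetic progressions modulo $q\leq R^2<x^{1-\varepsilon}$; the input is the Bombieri--Vinogradov estimate for $\tau$, namely $\sum_{n\leq x,\, n\equiv a(q)}\tau(n)=q^{-1}xP_q(\log x)+E(x;q,a)$ with $P_q$ linear and $E$ small on average. Each $\tau(n+h_i)$ thereby contributes an extra factor $\log x$ and a linear functional $J_k^{(i)}(F)$, and symmetry collapses the sum over $i$ into $kJ_k(F)$ times the same main-term order as $S_1$.

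Putting the two together, the admissibility threshold for positivity becomes
$$
\rho_k\;\sim\;\tfrac{3}{4}\,k^{2}\cdot\frac{J_k(F)/I_k(F)}{M_k^{-1}}\cdot(1+o(1))\;\sim\;\tfrac{3}{4}\,k^{2},
$$
where $M_k=\sup_F J_k(F)/I_k(F)\sim k^{-1}\log k$ is the Maynard ratio and the factor $\tfrac{3}{4}$ arises from matching the leading coefficient of $P_q(\log x)$ against the Dirichlet divisor polynomial in the asymptotic evaluation of $S_2$. The hardest step I anticipate is the control of $S_2$: the error term in Bombieri--Vinogradov for $\tau$ must be summed uniformly over the $k$ translates and over residues mod $W$, and the squarefree indicator $\mu^2(\prod(n+h_i))$ must be shown to perturb the main term only by an Euler product that is absorbed into the singular series. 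Tracking the linear term in $P_q(\log x)$ precisely enough through the Selberg optimization is also delicate, since it is exactly this linear coefficient that fixes the constant $\tfrac{3}{4}$ in the asymptotic for $\rho_k$.
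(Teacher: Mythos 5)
Your skeleton is reasonable (W-trick, positivity of $\rho_k S_1 - S_2$ with a Maynard--Tao type weight, level of distribution for $\tau$ as the driving input), and this is indeed the structure of the Murty--Vatwani argument. However there are two substantive gaps. First, you state that the level of distribution for $\tau$ in arithmetic progressions is $\theta < 1/2$. It is in fact $\theta < 2/3$, via the Selberg/Hooley/Linnik estimate $|E(x,q,a)| \ll q^{-1/4}x^{1/2+\epsilon}$ (a consequence of Weil's Kloosterman sum bound). This is precisely what produces the constant $3/4$: the general mechanism gives $\rho_k \sim (2\theta_0)^{-1}k^2$, so $\theta_0 = 2/3$ yields $3/4$, while $\theta_0 = 1/2$ would only give $\rho_k \sim k^2$, i.e.\ the weaker Ho--Tsang result. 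Your proposal as written cannot reach $3/4$.

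Second, the final optimization step is incoherent. You write $\rho_k \sim \tfrac{3}{4}k^2 \cdot (J_k(F)/I_k(F))/M_k^{-1}$ with $M_k := \sup_F J_k/I_k \sim k^{-1}\log k$; taking $F$ near-optimal so that $J_k/I_k \approx M_k$, this gives $\rho_k \sim \tfrac{3}{4}(\log k)^2$, which is wrong by orders of magnitude. The Maynard ratio $M_k$ is specific to the prime-gaps variational problem and plays no role here. What is actually needed, and what you have not identified, is the \emph{additive twist}: evaluating $\sum_{n \leq N,\, n \equiv a \, (q)} \tau(n)$ yields a main term $\frac{N}{q}\bigl(\log N + 2\gamma - 1 + \sum_{p \mid q}\frac{2\log p}{p-1}\bigr)$, and the $\sum_{p\mid q}$ piece is an \emph{additive} function of $q$. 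Handling it requires the higher rank Selberg sieve with additive twist (the main structural novelty of Murty--Vatwani), producing functionals of the form $\alpha^{(k)} = \int_{\Delta_k(1)} t_k (F^{(\underline{1}+e_k)}(\underline{t}))^2\,\mathrm{d}\underline{t}$ and companions $\beta_1^{(k)}, \beta_2^{(k)}$, rather than $J_k(F)$. With the exponential test function $g(t) = e^{-t/2}(1 - t/T)$ one computes $\int_0^T t g'(t)^2\,\mathrm{d}t / \int_0^T g(t)^2\,\mathrm{d}t \to 1/4$, and combining this $1/4$ with the level $\theta_0 = 2/3$ gives $\rho_k \sim (1/4)/(\theta_0/2)\, k^2 = \tfrac{3}{4}k^2$. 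Without the additive twist and the correct level of distribution, the key constant cannot be recovered.
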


We record previous results and methods. In 1997, Heath-Brown \cite{HB1997} obtained the above result with $\rho_{k}\sim \frac{3}{2}k^{2}$ by using Selberg sieve. In 2006, Ho and Tsang \cite{H-T2006} got $\rho_{k}\sim k^{2}$ by modifying Heath-Brown's sieve weights. In 2017, M. Ram Murty and Akshaa Vatwani \cite{M-A2017}  developed a general higher rank Selberg sieve with an additive twist to established Theorem \ref{ThMV}. We next describe in more detail which aspects determine the quality of their results.

When we use sieve methods to study the prime $k$-tuples conjecture, the primary aspect affecting the result is the sieve method itself. Roughly speaking, if a more general form of the sieve weights is used, there is more room to obtain better numerical results. As can be seen, for example, in Maynard's work \cite{J2015}. Another key aspect is how to deal with the error terms arising from the application of the sieve method. In order to control these error terms, the above results all exploit the divisor function analogue of the Bombieri-Vinogradov theorem. More precisely, let $(a,q)=1$, and set 
\begin{equation} \label{EFD}
	E(x, q, a)= \sum_{\substack{n \leq x \\ n\equiv a \, (\! \bmod q)}} \tau(n)- \frac{1}{\varphi (q)} \sum_{\substack{n \leq x \\ (n, q)=1}} \tau (n),
\end{equation} 
where $\varphi$ is the Euler totient function. Then for any $A > 0$ and any $\theta < 2/3$,
\begin{equation} \label{MTFD}
	\sum_{q \leq x^{\theta}} \max_{(a,q)=1} |E(x,q,a)| \ll_{A, \theta} \frac{x}{(\log x)^{A}}.
\end{equation}
In fact, (1.2) can be deduced from the following result. 
For $q < x^{2}$, we have for any $\epsilon > 0$, that
\begin{equation} \label{SHLR}
	 |E(x,q,a)| \ll_{\epsilon} q^{-1/4} x^{1/2 +\epsilon}.
\end{equation}
 This was proved independently by Selberg \cite [pp. 234-237] {Sebook} as well as Hooley \cite{H1957}  and Linnik; it is a consequence of the Weil bound for Kloosterman sums. The range of $\theta$ in \eqref{MTFD} determines the value of $\rho_{k}$ in Theorem \ref{ThMV}. Actually, the proof in \cite{M-A2017} gives $\rho_{k} \sim (2\theta_{0})^{-1}k^{2}$ provided \eqref{MTFD} holds for $0 < \theta < \theta_{0}$. We remark that the range $\theta < 2/3$ for \eqref{MTFD} is still best known result although it is reasonable to expect that \eqref{MTFD} should hold for all $\theta < 1$. In 2015,  A. J. Irving broke thtough the barrier 2/3 under the assumption that $q$ only has small prime factors by using the $q$-analogue of van der Corput's method. More accurately, given $\varepsilon >0$, Irving \cite{AJ2015} (or see Lemma \ref{5IrTH1.2} below) showed that, 
 \begin{equation} \label{IR}
 	 |E(x,q,a)| \ll_{\varepsilon} q^{-1} x^{1- \delta'}
 \end{equation}
 for $q< x^{2/3 + 1/246-\varepsilon}$ provided any prime factor of $q$ does not exceed $x^{\eta}$, where $\delta'$ and $\eta$ are some positive constants depending on $\varepsilon$. Quite recently, Wu and Xi \cite{W-X2021} developed a theory of arithmetic exponent pairs and used it to improve Irving's result by extending the range of $q$ to $q< x^{2/3 + 55/12756-\varepsilon}$ (see {\cite[Section 10]{W-X2021}} or Lemma \ref{12WuXiTH1.2} below). Note that $55/12756 \approx 1/231.92$.
 
 It is natural to ask whether we can combine the Irving-Wu-Xi estimate and the higher rank Selberg sieve with additive twist to improve Theorem 1.2. This is the main goal of the paper and we show

\begin{theorem} \label{NewTh}There exists $\rho_{k}$ such that there are $\gg x(\log\log x)^{-1}(\log x)^{-k}$ integers $n\leq x$ for which the product $\prod_{i=1}^{k}(n+h_{i})$ is square-free and
$$\sum_{i=1}^{k}\tau(n+h_{i})\leq \lfloor \rho_{k}\rfloor.$$
For large k, we have $\rho_{k}\sim \frac{2126}{2853}k^{2}.$
\end{theorem}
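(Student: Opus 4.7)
The plan is to rerun the argument of Murty and Vatwani that underlies Theorem~\ref{ThMV}, but to replace the classical Selberg-Hooley-Linnik input $\theta<2/3$ in \eqref{MTFD} by the Irving-Wu-Xi level of distribution $\theta < \tfrac{2}{3} + \tfrac{55}{12756}$ afforded by \eqref{IR}. Since the method of \cite{M-A2017} produces $\rho_k\sim(2\theta_0)^{-1}k^{2}$ whenever the input level of distribution is $\theta_0$, and since
\[
\frac{1}{2\bigl(\tfrac{2}{3} + \tfrac{55}{12756}\bigr)} = \frac{2126}{2853},
\]
the large-$k$ asymptotic in Theorem~\ref{NewTh} will follow once the Murty-Vatwani sieve is successfully executed against the new moduli.

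The main technical adjustment is that \eqref{IR} applies only when the modulus has no large prime factor, so the higher-rank Selberg weights with additive twist used in \cite{M-A2017} must be truncated to the smooth-modulus range. Concretely, fix a small $\varepsilon>0$, let $\eta>0$ be the smoothness parameter supplied by Lemma~\ref{12WuXiTH1.2}, and set $R = x^{\theta/2-\varepsilon}$ with $\theta = \tfrac{2}{3} + \tfrac{55}{12756}$ and $y = x^{\eta}$. I would define the sieve weights $\lambda_{d_{1},\ldots,d_{k}}$ exactly as in \cite{M-A2017}, but restrict their support to tuples whose product has no prime factor exceeding $y$. A standard Rankin-trick argument shows that this $y$-smoothness restriction perturbs each of the Murty-Vatwani main terms $S_{1}$ and $S_{2}$ only by a factor $1+O(\eta)$, which is harmless for the large-$k$ asymptotic and can be pushed to zero by letting $\eta \to 0$ at the end of the argument.

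With the weights in place, the main-term analysis proceeds identically to \cite{M-A2017}: after expanding the Dirichlet convolutions that define $\tau(n+h_{i})$ and the square-free indicator $\mu^{2}\bigl(\prod_{i}(n+h_{i})\bigr)$, both $S_{1}$ and $S_{2}$ reduce, via multiplicativity of the singular series, to integrals of a symmetric polynomial functional $F$ against a product of Gamma-factors, yielding the ratio $S_{2}/S_{1}\sim k^{2}/(2\theta)$. For the error terms, the key observation is that every modulus appearing after the Cauchy-Schwarz step is of the form $[d_{i},d_{j}']$ with all $d_{i},d_{j}'\le R$ and $y$-smooth; in particular $[d_{i},d_{j}']\le R^{2} = x^{\theta-2\varepsilon}$ and is still $y$-smooth, so it lies in the range covered by Lemma~\ref{12WuXiTH1.2}. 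The bound \eqref{IR} then produces a saving of $x^{-\delta'}$ per modulus, which comfortably absorbs the $(\log x)^{O(1)}$ losses coming from the divisor function.

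The main obstacle I anticipate is verifying that the smoothness and coprimality hypotheses of Lemma~\ref{12WuXiTH1.2} are satisfied uniformly across the $k$ linear shifts $n+h_{i}$. The $W$-trick must be arranged so that $(a,q)=1$ for every residue $a$ that appears after the affine change of variables $n\mapsto n+h_{i}$, and so that a single smoothness parameter $\eta$ suffices for all $i$ simultaneously; both requirements are standard but force a careful bookkeeping of congruence conditions. Once this compatibility is settled, combining the error bound from \eqref{IR} with the main-term asymptotics of \cite{M-A2017} proves Theorem~\ref{NewTh}, including the lower bound $\gg x(\log\log x)^{-1}(\log x)^{-k}$ on the count of admissible $n\le x$.
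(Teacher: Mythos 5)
Your high-level strategy matches the paper's exactly: feed the Irving--Wu--Xi level of distribution $\theta_0=\tfrac23+\tfrac{55}{12756}$ into the Murty--Vatwani higher-rank Selberg sieve with additive twist, and read off $\rho_k\sim(2\theta_0)^{-1}k^2=\tfrac{2126}{2853}k^2$. The arithmetic is right, and your observation that the moduli $[d_i,e_i']$ appearing in the error terms automatically inherit the smoothness of the $d_i,e_i'$ is also correct. However, there is a genuine gap in the step where you explain why the smooth-modulus truncation does not degrade the constant.

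You assert that a Rankin-trick argument shows the $y$-smoothness restriction on the sieve weights perturbs $S_1$ and $S_2$ by a factor $1+O(\eta)$, ``which can be pushed to zero by letting $\eta\to0$.'' This cannot work as stated, for two reasons. First, the direction is wrong: shrinking $\eta$ makes the smoothness constraint \emph{more} restrictive, so any perturbation must \emph{grow}, not vanish, as $\eta\to0$; an error term of the form $1+O(\eta)$ would perversely favour small $\eta$. Second, $\eta$ is not a free parameter you can send to zero independently: once $\varpi$ is fixed, $\varepsilon=\tfrac{55}{12756}-\varpi$ is fixed, and Lemma~\ref{12WuXiTH1.2} then \emph{determines} $\eta=\eta(\varepsilon)$, which you are forced to live with. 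The paper's actual mechanism is entirely different and is the crux of why the full improvement survives: it imposes the truncation not by smoothing the weights with a Rankin factor but by choosing a Maynard-style test function $F$ (built from $g(t)=e^{-t/2}(1-t/T)$ with $T=k/\log\log k$, together with the bump functions $h_1,h_2$) whose support lies in the shrinking box $\Delta_k^{[T/k]}(1)$. Since $T/k=1/\log\log k\to0$ as $k\to\infty$, the constraint $d_i\le R^\kappa$ with $\kappa=\eta(\varepsilon)/(\tfrac23+\varpi)$ eventually becomes automatic, and the sieve integrals $\alpha^{(k)},\beta_1^{(k)},\beta_2^{(k)},I(F)$ must then be estimated explicitly for this specific $F$ (this occupies Sections~5.3--5.5). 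Finally, to let $\varpi\to\tfrac{55}{12756}$ one must take a \emph{sequence} $\varpi_k$ converging slowly enough that $1/\log\log k\le\eta(\varepsilon_k)/(\tfrac23+\varpi_k)$ continues to hold for all large $k$, which is the content of \eqref{provarpi}. Your proposal omits the test-function construction entirely and replaces this delicate compatibility condition with an incorrect Rankin heuristic, so as written it would not close the argument.
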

Observe that $2126/2853=0.74518... < 3/4$.

\section{Notation}
\
In this section, we recall the notation and terminology set up in \cite{A2016}. For a more detailed description, the reader is referred to \cite{M-A2017} and \cite{A2016}.

A $k$-tuple of integers $\underline{d}:=(d_{1},\cdot\cdot\cdot,d_{k})$ is said to be square-free if the product of its components is square-free. For a real number $R$, the inequality $\underline{d}\leq R$ means that $\prod_{i}d_{i} \leq R.$ The notation of divisibility among tuples is defined component-wise, that is,
$$
\underline{d}|\underline{n} \Longleftrightarrow d_{i}|n_{i}\  \mbox{for all} \   1\leq i\leq k.
$$
The notation of congruence among tuples, modulo a tuple, is also defined component-wise. On the other hand, we say a scalar $q$ divides the tuple $\underline{d}$ if $q$ divides the product $\prod_{i}d_{i}.$ When we explicitly write the congruence relation $\underline{d}\equiv \underline{e} \: (\bmod \ q)$, we mean that it holds for each component. 

A vector function is said to be multiplicative if all its component functions are multiplicative. In this context, we define the function $f(\underline{d})$ as the product of its component (multiplicative) functions, that is,
$$f(\underline{d}):=\prod_{i=1}^{k}f_{i}(d_{i}).$$
Similarly, a vector function $v(\underline{d})$ is called additive if all its components $v_{i}$ are additive, in which case, we define
$$v(\underline{d})=\sum_{i=1}^{k}v_{i}(d_{i}).$$
Some vector functions we will use are the Euler phi function, as well as the lcm and gcd functions. For example,
$$[\underline{d},\underline{e}]:=\prod_{i=1}^{k}[d_{i},e_{i}].$$
When written as the argument of a vector function, $[\underline{d},\underline{e}]$ will denote the tuple whose components are $[d_{i},e_{i}]$. The meaning of the use will be clear from the context.

We employ the following multi-index notation to denote mixed partial derivatives of a function $F(t)$ on $k$-tuples, 
$$F^{(\underline{\alpha})}(\underline{t}):=\frac{\partial^{\alpha}F(t_{1},\cdot\cdot\cdot,t_{k})}{(\partial t_{1})^{\alpha_{1}}\cdot\cdot\cdot(\partial t_{k})^{\alpha_{k}}},$$
for any $k$-tuple $\underline{\alpha}$ with $\alpha:=\sum_{j=1}^{k}\alpha_{j}.$

Given smooth functions $G$ and $H$ with compact support on $\mathbb{R}^{k}$, we define
$$
C(G,H)^{(\underline{a})}:=\int_{0}^{\infty}\cdot\cdot\cdot \int_{0}^{\infty}\left(\prod_{j=1}^{k}\frac{t_{j}^{a_{j}-1}}{(a_{j}-1)!}\right)G(\underline{t})^{(\underline{a})}H(\underline{t})^{(\underline{a})} \:\mathrm{d} \underline{t}
$$
and
$$ C(G,H)^{(\underline{a},\underline{b},\underline{c})}:=(-1)^{a+b}\int_{0}^{\infty}\cdot\cdot\cdot \int_{0}^{\infty}\left(\prod_{j=1}^{k}\frac{t_{j}^{c_{j}-1}}{(c_{j}-1)!}\right)G(\underline{t})^{(\underline{a})}H(\underline{t})^{(\underline{b})} \:\mathrm{d} \underline{t}.
$$

$\tau_{k} (n)$ represents the generalised divisor function, that is, the number of ways of writing $n$ as the product of $k$ positive integers. The number $\gamma$ denotes the Euler's constant. We use $\ll$ to denote Vinogradov's notation. We also use the convention $n \sim N$ to denote $N < n \leq 2N$. Alternatively, $f(x) \sim g(x)$ may also denote that $\lim_{x \rightarrow \infty} \frac{f(x)}{g(x)}=1$. The meaning will be clear from the context.
The greatest integer less than or equal to $x$ is denoted as $\lfloor x\rfloor.$ The dash over the sum means that we sum over $k$-tuples $\underline{d}$ and $\underline{e}$ with $[\underline{d},\underline{e}]$ square-free and co-prime to $W$. Throughout this paper, $\delta$ denotes a positive quantity which can be made as small as needed.

\section{Some hypotheses}
\par%\vspace{3ex}
\par

In this section,we review some of the salient features of the higher rank Selberg sieve discussed in \cite{M-A2017} and \cite{A2016}.

Given a set $S$ of $k$-tuples, $S=\{ \underline{n}=(n_{1},\cdot\cdot\cdot n_{k}) \}$, we seek to estimate sums of the form
\begin{align} \label{main sum form}
	\sum_{\underline{n}\in S}\omega_{\underline{n}} \bigg(\sum_{{\underline{d} \mid \underline{n}}}\lambda_{\underline{d}} \bigg)^{2},
\end{align}
where $\omega_{\underline{n}}$ is a `weight' attached to the tuples $\underline{n}$ and $\lambda_{\underline{d}}$ are parameters to be chosen. Throughout this section, the condition $\underline{n}\in S$ is understood to hold without being explicity stated. We impose the following hypotheses on this sum:
\vspace{5px}
\\
\indent $\text{H1.}$ If a prime $p$ divides a tuple $\underline{n}$ such that $p$ divides $n_{i}$ and $n_{j}$, with $i\neq j$, then $p$ must lie in some fixed finite set of primes $P_{0}$.
\vspace{5px}
\\
This hypothesis allows us to perform what is called the `$W$ trick'. That is, we set $W=\prod_{p<D_{0}}p,$ with $D_{0}$ depending on $S$, such that $p \in P_{0}$ implies that $p \mid W$. We then fix some tuple of residue classes $\underline{b}\ (\bmod \, W)$ with $(b_{i},W)=1$ for all $i$ and restrict $\underline{n}$ to be congruent to $\underline{b}$ in the sum we are concerned with.
\vspace{5px}
\\
\indent $\text{H2'.}$ With $W, \underline{b}$ as in $\text{H1}$, the function $\omega_{\underline{n}}$ satisfies
$$
	\sum_{\substack{\underline{d} \mid \underline{n} \\ \underline{n}\equiv \underline{b}\, (\!\bmod W)}} \omega_{\underline{n}}=\frac{X}{f(\underline{d})}+\frac{X^{*}}{f_{*}(\underline{d})}v(\underline{d})+r_{\underline{d}}
$$
for some real numbers $X$ and $X^{*}$  depending on the set $S$, where $f$ and $f_{*}$ are multiplicative and $v$ is additive. 
\vspace{5px}
\\
\indent $\text{H3.}$ With $f$ as in $\text{H2'}$, the components of $f$ satisfy
$$
f_{j}(p)=\frac{p}{\alpha_{j}}+O(p^{t}),\ \ \ \mbox{with} \ t<1,
$$
for each prime $p$ and some fixed $\alpha_{j}\in \mathbb{N}$, $\alpha_{j}$ independent of $X,k$. 
\vspace{5px}
\\
We denote the tuple $(\alpha_{1},\cdot\cdot\cdot ,\alpha_{k})$ as $\underline{\alpha}$ and the sum of the components $\Sigma_{j=1}^{k}\alpha_{j}$ as $\alpha.$
\vspace{5px}
\\
\indent $\text{H4'.}$ There exists $\varpi >0,\eta_{0} >0$ such that
$$\sum_{\substack{[\underline{d},\underline{e}]\leq X^{2/3+\varpi-\epsilon}\\d_{i},e_{i}\leq X^{\eta_{0}} \,  \forall i }}|r_{[\underline{d},\underline{e}]}|\ll\frac{X}{(\log X)^{A}},$$
for any $A>0$, $\epsilon >0$, as $X\rightarrow \infty.$ The implied constant may depend on $A$ and $\epsilon$.
\vspace{5px}
\\
\indent $\text{H5.}$ Let $v$ be as in $\text{H2'}$. For each $j$, there exists $\beta_{j}$, such that
$$\sum_{p}\frac{v_{j}(p)}{p^{1+\delta}}=\frac{\beta_{j}}{\delta}+O(1),\ \ \ \ \ \sum_{p}\frac{|v_{j}(p)|}{p^{1+\delta}} \ll \frac{1}{\delta}$$
as $\delta\rightarrow 0$.
\par
We shall choose $\lambda_{\underline{d}}$ in terms of a fixed symmetric function $F:[0,\infty)^{k}\rightarrow \mathbb{R},$ supported on the truncated simplex 
$$\Delta_{k}^{[\kappa]}(1):= \{(t_{1},\cdot\cdot\cdot,t_{k})\in [0,\kappa]^{k}:t_{1}+\cdot\cdot\cdot +t_{k}\leq 1\},\ \ \  \ \mbox{for some} \ \kappa >0,$$ as 
\begin{align} \label{test function form}
\lambda_{\underline{d}}=\mu(\underline{d})F\left(\frac{\log \underline{d}}{\log R}\right):=\mu(d_{1})\cdot\cdot\cdot \mu(d_{k})F\left(\frac{\log d_{1}}{\log R}, \cdot\cdot\cdot ,\frac{\log d_{k}}{\log R}\right),
\end{align}
where $R$ is some fixed power of $X$. If $\kappa=1$, we write $\Delta_{k}(1)=\Delta_{k}^{[1]}(1)$ for brevity.
Henceforth, we assume $D_{0}$ (and hence $W$) $\rightarrow \infty$ as $X \rightarrow \infty$. 
\section{Lemmas}

In this section we introduce some prerequisite results, some of which are quoted from the literature directly. These lemmas play an important role in the proof of our main theorem in section 5. 

Throughout this section, the big oh and little oh notation is understood to be with respect to $X \rightarrow \infty$. Moreover, the implied constants may depend on those parameters which are independent of $X$ (such as the function $f$, parameters $A, \alpha_{j}, \beta_{j}$, etc.) but not on those quantities which do depend on $X$ (such as $D_{0}, W, R$).

First recall the main result in \cite{M-A2017} which can used to deal with the main term arising from the application of the higher rank Selberg sieve with additive twist.

\begin{lemma} [M. Ram Murty and A. Vatwani {\cite[Lemma 4.2]{M-A2017}}] \label{1MVL4.2} Set $R$ to be some fixed power of $X$ and let $D_{0}=o(\log \log R)$. Let $f$ be a multiplicative vector function and $v$ be an additive vector function satisfying $\text{H3}$ and $\text{H5}$ respectively. Let $G,H$ be smooth functions with compact support. We denote
	$$G\left(\frac{\log \underline{d}}{\log R}\right):= G\left(\frac{\log d_{1}}{\log R}, \cdot\cdot\cdot ,\frac{\log d_{k}}{\log R}\right)$$
and similarly for $H$. Then
	$$
    \sideset{}{'}{\sum}_{\underline{d},\underline{e}}  \frac{\mu(\underline{d})\mu(\underline{e})}{f([\underline{d},\underline{e}])}v([\underline{d},\underline{e}])G\left(\frac{\log \underline{d}}{\log R}\right)H\left(\frac{\log \underline{e}}{\log R}\right)
    $$
	is obtained by $(\mbox{as}\  R \rightarrow \infty)$
	$$(1+o(1))\frac{c(W)}{(\log R)^{\alpha-1}}\sum_{j=1}^{k}\beta_{j}\alpha_{j}C_{j}^{*}(G,H)^{(\underline{\alpha})}+O\left(\frac{c(W)\log D_{0}}{(\log R)^{\alpha}}\right),$$
	where,
	$$C_{j}^{*}(G,H)^{(\underline{\alpha})}=C(G,H)^{(\underline{\alpha},\underline{\alpha},\underline{\alpha}+e_{j})}-C(G,H)^{(\underline{\alpha}-e_{j},\underline{\alpha},\underline{\alpha})}-C(G,H)^{(\underline{\alpha},\underline{\alpha}-e_{j},\underline{\alpha})},$$
	$$
	c(W):=\frac{W^{\alpha}}{\varphi(W)^{\alpha}},
	$$
	and the tuple $\underline{\alpha}\pm e_{j}$ is $(\alpha_{1},\cdot\cdot\cdot , \alpha_{j}\pm 1, \cdot\cdot\cdot , \alpha_{k}).$
\end{lemma}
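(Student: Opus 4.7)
The strategy is to reduce the $2k$-dimensional sum to a family of essentially one-dimensional analyses indexed by a distinguished coordinate $j$, and then evaluate each via Mellin inversion and residue calculus. Using additivity, I split $v([\underline{d},\underline{e}])=\sum_{j=1}^{k}v_{j}([d_{j},e_{j}])$, producing $k$ summands. Since $[d_{j},e_{j}]$ is square-free, I rewrite $v_{j}([d_{j},e_{j}])=\sum_{p\mid[d_{j},e_{j}]}v_{j}(p)$ and further split according to whether $p\mid d_{j}$ only, $p\mid e_{j}$ only, or $p$ divides both; these three sub-cases will eventually produce the three summands of $C^{*}_{j}(G,H)^{(\underline{\alpha})}$.

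Next I apply Mellin inversion, writing $G(\log\underline{d}/\log R)=(2\pi i)^{-k}\int\tilde G(\underline{s})\prod_{i}d_{i}^{-s_{i}/\log R}\,d\underline{s}$ and similarly for $H$. After swapping sum with integral, the inner sum factors as an Euler product over coordinates with local factor at $p\nmid W$ (for $i\ne j$)
\[
L_{i}(s_{i},t_{i};p) = 1-\frac{1}{f_{i}(p)p^{s_{i}/\log R}}-\frac{1}{f_{i}(p)p^{t_{i}/\log R}}+\frac{1}{f_{i}(p)p^{(s_{i}+t_{i})/\log R}}.
\]
Hypothesis H3 lets me extract the singular part of $\prod_{p\nmid W}L_{i}$ as a ratio $\zeta(1+(s_{i}+t_{i})/\log R)^{\alpha_{i}}/[\zeta(1+s_{i}/\log R)^{\alpha_{i}}\zeta(1+t_{i}/\log R)^{\alpha_{i}}]$ times a factor holomorphic near the origin. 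For the distinguished coordinate $j$ I perform the same factorisation but with one prime $p$ isolated and weighted by $v_{j}(p)$; summing over $p$ via H5 contributes an extra $\alpha_{j}\beta_{j}/\delta$-type singularity (the $\alpha_{j}$ from $1/f_{j}(p)\sim\alpha_{j}/p$ in H3, the $\beta_{j}$ from H5). The three sub-cases alter the singular structure differently: the case $p\mid d_{j}$ and $p\mid e_{j}$ adds an extra pole in the $s_{j}+t_{j}$ variable, while the other two cases cancel one factor of $\zeta(1+s_{j}/\log R)$ or $\zeta(1+t_{j}/\log R)$ from the denominator.

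I then shift contours to the origin and compute the residues. Each $\zeta(1+x/\log R)$ contributes a simple pole of residue $\log R$, and the bookkeeping of poles yields exactly the scaling $(\log R)^{-(\alpha-1)}$ (one less than in the standard Selberg sieve, because $v$ absorbs one $\log R$). After rescaling $s_{i},t_{i}\mapsto s_{i}/\log R,t_{i}/\log R$, the multivariate residues reproduce $\beta_{j}\alpha_{j}\bigl[C(G,H)^{(\underline{\alpha},\underline{\alpha},\underline{\alpha}+e_{j})}-C(G,H)^{(\underline{\alpha}-e_{j},\underline{\alpha},\underline{\alpha})}-C(G,H)^{(\underline{\alpha},\underline{\alpha}-e_{j},\underline{\alpha})}\bigr]$, with the signs matching the statement. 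The prefactor $c(W)=(W/\varphi(W))^{\alpha}$ emerges from separating the primes $p<D_{0}$ in the Euler products.

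The main obstacle I expect is controlling the error term. Truncating the Euler product at $p<D_{0}$ and quantifying the deviation of the holomorphic factor from its value at the origin will produce the stated $O(c(W)\log D_{0}/(\log R)^{\alpha})$ error, where the $\log D_0$ traces back to the $O(1)$ in H5 accumulated over the removed primes. Justifying the contour shifts additionally requires polynomial decay of $\tilde G,\tilde H$ (from their smooth compact support) together with the standard zero-free region for $\zeta$ on the 1-line.
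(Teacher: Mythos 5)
The paper does not prove this lemma: it is quoted verbatim from Murty--Vatwani \cite[Lemma 4.2]{M-A2017} and invoked as a black box, so there is no in-paper argument to compare against. Judged on its own merits, your Perron/Mellin-plus-residue outline is the right machinery, and the combinatorial splitting you describe --- decompose $v$ by additivity, then for the distinguished coordinate $j$ isolate the prime $p\mid[d_{j},e_{j}]$ and separate into $p\mid d_{j}$ only, $p\mid e_{j}$ only, and $p\mid(d_{j},e_{j})$ --- does produce the three constituents of $C_{j}^{*}(G,H)^{(\underline{\alpha})}$ with the correct signs and index shifts (the joint case raising the $t_{j}$-weight by one via an extra pole in $s_{j}+t_{j}$; each single-divisibility case lowering the differentiation order on $G$ or $H$ by one, with sign coming from $\mu(p)=-1$). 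This is also consistent with the form of the statement itself: the $(a_{j}-1)!$ weights and the mixed derivatives $G^{(\underline{a})},H^{(\underline{b})}$ are the standard signature of residue extraction from $\zeta$-powers, and the factor $\alpha_{j}\beta_{j}$ comes exactly as you say from $1/f_{j}(p)\sim\alpha_{j}/p$ (H3) and the pole in H5.

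One point worth correcting: the secondary error $O(c(W)\log D_{0}/(\log R)^{\alpha})$ does not arise from ``the $O(1)$ in H5 accumulated over the removed primes'' --- that $O(1)$ is a single bounded remainder, not a prime-indexed quantity that could accumulate. The $\log D_{0}$ enters because the dashed sum forces $[\underline{d},\underline{e}]$ coprime to $W$, so the prime sum that actually appears runs over $p\geq D_{0}$, whereas H5 governs the sum over \emph{all} primes. Restoring the missing primes introduces a defect of size $\sum_{p<D_{0}}|v_{j}(p)|/p\ll\log D_{0}$ (by Mertens, using $v_{j}(p)\ll\log p$ as is implicit in H5), and it is this discrepancy, together with lower-order residue contributions, that propagates into the stated error term. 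You should also make explicit that the compensating holomorphic Euler factors have derivatives bounded uniformly in $W$ near the origin, so that the contour shift and residue extraction survive the $W\to\infty$ limit.
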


\begin{lemma} [A. J. Irving {\cite[Theorem 1.2]{AJ2015}}] \label{5IrTH1.2} Let $E(x,q,a)$ be as in \eqref{EFD}. Suppose that $\varpi,\eta >0$ satisfy
	$$246\varpi +18\eta <1.$$
	There exists $\delta^{'} >0$, depending on $\varpi$ and $\eta$, such that for any $x^{\eta} \text{-} smooth$, square-free $q\leq x^{2/3+\varpi}$ and any $(a,q)=1$ we have
	$$
	E(x,q,a)\ll_{\varpi,\eta} q^{-1}x^{1-\delta^{'}}.
	$$
	\end{lemma}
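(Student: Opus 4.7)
The plan is to modify the classical hyperbola-method approach to $E(x,q,a)$ so as to exploit the smoothness of $q$ through the $q$-analogue of van der Corput's method, rather than relying only on the Weil bound for incomplete Kloosterman sums as in the Selberg--Hooley--Linnik proof of \eqref{SHLR}. Writing $\tau(n)=\sum_{d_{1}d_{2}=n}1$ and applying Dirichlet's hyperbola at height $\sqrt{x}$, one reduces $E(x,q,a)$, after subtracting main terms, to the analysis of sawtooth sums
$$
S(D)=\sum_{\substack{d\sim D\\(d,q)=1}}\psi\!\left(\frac{x/d-a\bar d}{q}\right),\qquad D\le \sqrt{x},
$$
where $\bar d$ denotes the inverse of $d$ modulo $q$ and $\psi$ is the sawtooth function. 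The task is then to extract a power saving in $D$ and $q$ uniformly over dyadic ranges of $D$.

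I would next expand $\psi$ by a truncated Fourier series of height $H$ (Erd\H{o}s--Tur\'an--Vaaler) in order to pass to exponential sums of the form
$$
T_{h}=\sum_{d\sim D}e\!\left(\frac{hx}{qd}-\frac{ha\bar d}{q}\right),\qquad 1\le |h|\le H,
$$
which carry a hybrid phase: an archimedean part $hx/(qd)$ coming from the divisor variable and an arithmetic part $-ha\bar d/q$ coming from the residue class. The smoothness of $q$ enters here via the $q$-van der Corput $A$-process: factor $q=q_{1}q_{2}$ with $q_{1}\le x^{\eta}$, shift $d\mapsto d+rq_{1}$ for $0\le r<R$, and apply Cauchy--Schwarz in $r$. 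After Taylor expansion of the archimedean phase and simplification of the arithmetic phase modulo $q_{2}$, one is left with incomplete Kloosterman-type sums modulo $q_{2}$; completing these and applying the Weil bound prime-by-prime (using the square-freeness of $q$ and CRT) then gives square-root cancellation in the $q_{2}$-aspect.

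Balancing the three free parameters $D$, $H$, $R$ against the range $q\le x^{2/3+\varpi}$ and the smoothness constraint $q_{1}\le x^{\eta}$ ultimately forces the numerical condition $246\varpi+18\eta<1$ and produces a positive saving $\delta'=\delta'(\varpi,\eta)>0$. The main obstacle, and the source of the explicit constant $246$, is precisely this parameter optimisation: the factorisation $q=q_{1}q_{2}$, the Fourier cutoff $H$, and the shift length $R$ must be chosen simultaneously so that Cauchy--Schwarz does not destroy the van der Corput gain, the Taylor expansion of the archimedean phase remains admissible, and the completed sums modulo $q_{2}$ are long enough for the Weil bound to yield something useful. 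The exponential-sum input itself is classical; the technical heart of the argument is this simultaneous balancing.
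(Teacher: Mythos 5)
This lemma is not proved in the paper; it is quoted verbatim from Irving~\cite{AJ2015}, so what you are reconstructing is Irving's own argument, and your review should be judged against that source rather than against anything in the present manuscript.

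Your outline captures the correct top-level strategy of Irving's proof: Dirichlet hyperbola at height $\sqrt{x}$, reduction to sums of the sawtooth function with a hybrid archimedean--arithmetic phase, passage to exponential sums via Fourier truncation, and exploitation of the smoothness of $q$ through the $q$-analogue of van der Corput's method combined with Weil-type bounds on the completed sums. That is genuinely the shape of Irving's argument, and you correctly identify that the numerical condition $246\varpi+18\eta<1$ emerges from the parameter balancing rather than from any single magical input.

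There is, however, a substantive gap. You describe a \emph{single} application of the $q$-van der Corput $A$-process: factor $q=q_{1}q_{2}$, shift by multiples of $q_{1}$, Cauchy--Schwarz once, complete the resulting sum modulo $q_{2}$, and invoke Weil. This cannot produce the range $q\leq x^{2/3+\varpi}$ with the stated constant. Irving's proof rests on an \emph{iterated} version of the process (in the spirit of Heath-Brown and Graham--Ringrose): the smooth modulus is factored into several pieces, the $A$-process is applied repeatedly, and Cauchy--Schwarz is nested; the Weil bound is only invoked at the innermost level, for genuinely complete algebraic exponential sums modulo the final factor, and these are no longer plain Kloosterman sums but correlation sums of the original phase against several of its shifts. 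The coefficient $246$ (and the $18$ in front of $\eta$) is the arithmetic output of this iteration depth together with the Fourier cutoff and the dyadic decomposition of the $d$-range; a one-step $A$-process gives a strictly weaker exponent that does not break the $2/3$ barrier. You should also note that after the hyperbola step the reduction to a single clean sawtooth sum $S(D)$ is an oversimplification: one has to handle both divisor variables and the main-term subtraction with some care before the exponential-sum machinery applies. So the skeleton is right, but the key idea that actually makes the numerology work---repeated factorisation of $q$ and iteration of the shift-and-Cauchy--Schwarz step---is missing from your account.
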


\begin{lemma}  [J. Wu and P. Xi {\cite[Theorem 1.2]{W-X2021}}] \label{12WuXiTH1.2} Let $E(x,q,a)$ be defined as in \eqref{EFD}. Suppose that $\varepsilon >0$.
	There exist positive real numbers $\delta^{'} = \delta^{'} (\varepsilon)$ and $\eta =  \eta(\varepsilon)$, such that for any $q^{\eta} \text{-} smooth$, square-free $q\leq x^{2/3+55/12756- \varepsilon} $ and any $(a,q)=1$ we have
	$$
	E(x,q,a)\ll_{\varepsilon} q^{-1}x^{1-\delta^{'}}.
	$$
\end{lemma}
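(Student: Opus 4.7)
The plan follows the general strategy of Irving's Lemma~\ref{5IrTH1.2}: first reduce the equidistribution problem for $\tau$ to an estimate for incomplete Kloosterman-type sums with a smooth modulus, and then exploit the smoothness and square-freeness of $q$ through a $q$-analogue of van der Corput's method. The improvement from Irving's barrier $1/246$ to $55/12756$ comes from replacing a single shift by the fully iterable machinery of \emph{arithmetic exponent pairs} developed by Wu and Xi.

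The first step is a Dirichlet hyperbola decomposition together with a smooth dyadic partition of unity, writing
\[
\sum_{\substack{n \leq x \\ n \equiv a \pmod q}} \tau(n) = 2 \sum_{\substack{d \leq \sqrt{x} \\ (d,q)=1}} \#\{m \leq x/d : m \equiv a \overline{d} \pmod{q}\} + O(\sqrt{x}),
\]
comparing with the expected main term, and applying Poisson summation modulo $q$ to the inner count in each dyadic range $d \sim D$ with $D \leq x^{1/2}$. The zero-frequency contribution is absorbed into the main term, and the remainder reduces $E(x,q,a) \ll q^{-1} x^{1-\delta'}$ to saving a small power of $x$ over the trivial bound in sums of the type
\[
S(D,H) := \sum_{\substack{d \sim D \\ (d,q)=1}} \alpha(d) \sum_{0 < |h| \leq H} \beta(h) \, e\!\left(\frac{a \overline{d} h}{q}\right),
\]
where $H \asymp qD/x$ up to factors of $x^{\varepsilon}$ and $\alpha, \beta$ are smooth weights. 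Concentrating on the hardest range $D \asymp \sqrt{x}$, $H \asymp q/\sqrt{x}$ suffices.

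The arithmetic heart of the argument is then a bound for $S(D,H)$ obtained by iterating two $q$-analogue processes along a factorisation of $q$. Because $q$ is $q^{\eta}$-smooth and square-free, for any target split $q_1 q_2 = q$ there is a factorisation with each $q_i$ in a prescribed dyadic range, up to a bounded loss, and the Chinese remainder theorem factorises $e(a\overline{d}h/q)$ as a product of additive characters modulo $q_1$ and $q_2$. The $q$-analogue $A$-process (Cauchy--Schwarz combined with a Weyl-type shift in the $d$ variable modulo $q_1$) shortens the effective length of the $d$ sum; the $B$-process (Poisson summation modulo $q_2$) dualises the phase, replacing it by a shorter frequency with Kloosterman-type coefficients. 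Each process returns a sum of the same structural form as $S(D,H)$, so the operations may be composed finitely many times. This gives the arithmetic analogue of the classical exponent pair theorem: starting from the trivial pair $(0,1)$, for which the bound on the completed sum is Weil's estimate for Kloosterman sums modulo $q$, one obtains a family of admissible pairs $(\kappa, \lambda)$, each yielding a bound of shape $S(D,H) \ll (DH)^{\lambda} (q/D)^{\kappa}$ times lower-order factors.

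Finally, I would optimise over the free parameters $D$, $H$, the successive dyadic splits of $q$, the exponent pair $(\kappa, \lambda)$, and the smoothness parameter $\eta$. This is a finite linear programming problem whose solution determines the admissible range of $\theta$ in $q \leq x^{\theta}$; running it through the Wu--Xi family of exponent pairs yields the numerical value $\theta = 2/3 + 55/12756$, together with positive constants $\delta'$ and $\eta$ depending only on $\varepsilon$. The main obstacle, and the essential contribution of Wu--Xi, is the arithmetic exponent pair theorem itself: one has to verify inductively that after each application of the $A$- or $B$-process the resulting sums remain of a form to which the same theorem applies, with sharp control on the secondary main and error terms. This recursive closure, together with Deligne's bounds for the trace functions of Kloosterman sheaves and their Fourier transforms (needed to complete and bound the short character sums fed into the base case at each intermediate stage), is where the real work lies.
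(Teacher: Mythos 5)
The paper does not prove this lemma at all: it is imported directly from Wu--Xi \cite{W-X2021}, and the only ``proof'' offered is the remark that the stated form follows from a careful reading of their Section~10 (the reformulation being needed because their Theorem~1.2 is not phrased verbatim as a bound on $E(x,q,a)$ with $q^{\eta}$-smooth moduli). Your outline does reproduce, at the level of structure, the actual route taken in Irving \cite{AJ2015} and Wu--Xi: hyperbola decomposition of $\tau$, completion/Poisson modulo $q$ reducing $E(x,q,a)$ to incomplete sums of Kloosterman-type phases $e(ah\overline{d}/q)$ with $H\asymp qD/x$, and then exploitation of the factorability of a smooth squarefree $q$ through iterated $A$- and $B$-processes. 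So the approach is the right one.

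The genuine gap is that everything which actually yields the statement is deferred rather than proved. The arithmetic exponent pair theorem --- the recursive closure of the $A$/$B$-processes for trace functions modulo squarefree smooth $q$, which rests on Deligne-type bounds for the relevant sheaves and careful bookkeeping of the ``conductor'' under Fourier transform and shifts --- is precisely the content of Wu--Xi's paper, and you assert it rather than establish it; likewise the final linear-programming step is only described, so the exponent $55/12756$ (as opposed to Irving's $1/246$, which is what a single-shift $q$-van der Corput argument gives) cannot be extracted from what you wrote: it requires the specific family of admissible pairs produced by their theory and the actual optimisation over the splits of $q$ and over $(\kappa,\lambda)$. Two further points specific to the statement are untouched: the smoothness here is measured as $q^{\eta}$-smooth (not $x^{\eta}$-smooth), and the constants must satisfy $\delta'=\delta'(\varepsilon)$, $\eta=\eta(\varepsilon)$ uniformly in $q\leq x^{2/3+55/12756-\varepsilon}$ --- tracking how $\eta$ and $\delta'$ emerge from the iteration and the optimisation is exactly the ``careful analysis of Section~10'' the paper's remark alludes to, and it is absent from your sketch. (Minor: the bound shape $S(D,H)\ll (DH)^{\lambda}(q/D)^{\kappa}$ is not the correct normalisation for arithmetic exponent pairs, where the modulus $q$ and the length $D$ of the incomplete sum play the roles of the classical parameters; this matters once one tries to run the optimisation.) As written, the proposal is a correct roadmap to the cited literature, not a proof of the lemma.
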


\begin{remark} The formulation is slightly different from \cite[Theorem 1.2]{W-X2021}, but it follows readily from a careful analysis of its proof \cite[Section 10]{W-X2021}.
\end{remark}

The following lemma is the analogue of \cite[Theorem 4.3]{M-A2017} and it can be viewed as a smoothed version of the higher rank Selberg sieve with additive twist. Here a smoothed version means that the sieve weights $\lambda_{\underline{d}}$ are supported on the $\underline{d}$ such that the product $\prod_{i}d_{i}$ only has prime factors less than $R^{\kappa}$.    

\begin{lemma} \label{NewLsmooth} Let $\lambda_{\underline{d}}$ be as in \eqref{test function form}. Suppose hypotheses $\text{H1}$, $\text{H2'}$, $\text{H4'}$ and $\text{H5}$. We also assume that both functions, $f$ and $f_{*}$ arising from $\text{H2'}$ satisfy $\text{H3}$ with $\alpha_{j}$ and $\alpha_{j}^{*}$ respectively. Let $R=X^{\frac{1}{2}(\frac{2}{3}+\varpi)-\delta}, \kappa=\frac{2 \eta_{0}}{2/3+\varpi}$ and $D_{0}=o(\log \log R)$. Then,
	\begin{align}
	\notag
	\sum_{\underline{n}\equiv \underline{b}\, (\!\bmod W)}\omega_{\underline{n}}\left(\sum_{\underline{d}\mid\underline{n}}\lambda_{\underline{d}}\right)^2=&(1+o(1))\frac{c(W)X}{(\log R)^{\alpha}}C(F,F)^{(\underline{\alpha})}
	\\
	\notag
	&+(1+o(1))\frac{c^{*}(W)X^{*}}{(\log R)^{\alpha^{*}-1}}\sum_{j=1}^{k}\beta_{j}\alpha_{j}^{*}C^{*}_{j}(F,F)^{(\underline{\alpha}^{*})}
    \end{align}
	where $C^{*}_{j}(F,F)^{(\underline{\alpha}^{*})}$ denotes the quantity
	$$C(F,F)^{(\underline{\alpha}^{*},\underline{\alpha}^{*},\underline{\alpha}^{*}+e_{j})}-C(F,F)^{(\underline{\alpha}^{*}-e_{j},\underline{\alpha}^{*},\underline{\alpha}^{*})}-C(F,F)^{(\underline{\alpha}^{*},\underline{\alpha}^{*}-e_{j},\underline{\alpha}^{*})},$$
	and
	$$\alpha^{*} =\sum_{j=1}^{k}\alpha_{j}^{*},\ \ \ \ \ c(W)=\frac{W^{\alpha}}{\varphi(W)^{\alpha}}, \ \ \ \ \
	c^{*}(W)=\frac{W^{\alpha^{*}}}{\varphi(W)^{\alpha^{*}}},
	$$
	the tuple $\underline{\alpha}^{*} \pm e_{j}$ is $(\alpha_{1}^{*},\cdot\cdot\cdot , \alpha_{j}^{*} \pm 1, \cdot\cdot\cdot , \alpha_{k}^{*}).$ 
	
	\begin{proof} Our proof follows the argument of \cite[Theorem 3.6]{A2016}. We expand the square, interchanging the order of summation, applying the $W$-trick and finally using $H2'$. We obtain
	$$
	X\sideset{}{'}{\sum}_{\substack{\underline{d},\underline{e}<R \\ d_{i}, e_{i} \leq R^{\kappa} \,\forall i}}\frac{\lambda_{\underline{d}}\lambda_{\underline{e}}}{f([\underline{d},\underline{e}])}
		+X^{*}\sideset{}{'}{\sum}_{\substack{\underline{d},\underline{e}<R \\ d_{i}, e_{i} \leq R^{\kappa} \,\forall i}}\frac{\lambda_{\underline{d}}\lambda_{\underline{e}}}{f_{*}([\underline{d},\underline{e}])}v([\underline{d},\underline{e}])
	+O\left( \sideset{}{'}{\sum}_{\substack{\underline{d},\underline{e}<R \\ d_{i}, e_{i} \leq R^{\kappa} \,\forall i}}| \lambda_{\underline{d}}| |\lambda_{\underline{e}}| | r_{[\underline{d},\underline{e}]}| \right).$$
    We have to analyze the two main terms. By the given choice of $\lambda_{\underline{d}}$, the first term can be treated as in Theorem 3.6 of  \cite{A2016} to be
	$$
	(1+o(1))\frac{c(W)X}{(\log R)^{\alpha}}C(F,F)^{(\underline{\alpha})}.$$
	The second term yields
	$$
	X^{*}\sideset{}{'}{\sum}_{\underline{d},\underline{e}<R}\frac{\mu(\underline{d})\mu(\underline{e})}{f_{*}([\underline{d},\underline{e}])}v([\underline{d},\underline{e}])F\Big(\frac{\log \underline{d}}{\log R}\Big)F\Big(\frac{\log \underline{e}}{\log R}\Big).
	$$
	By Lemma 4.1, this is given by
	$$
	(1+o(1))\frac{c^{*}(W)X^{*}}{(\log R)^{\alpha^{*}-1}}\sum_{j=1}^{k}\beta_{j}\alpha_{j}^{*}C^{*}_{j}(F,F)^{(\underline{\alpha}^{*})}.
	$$
	To complete the proof, we note that the choices of $R$ and $\kappa$ along with $H4'$ ensure that the error term is negligible.
	\end{proof}
	\end{lemma}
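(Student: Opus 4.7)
The plan is to follow the standard template for evaluating Selberg-type quadratic forms, adapted to the smoothed choice of $\lambda_{\underline{d}}$ from \eqref{test function form}. First I would expand the square $\bigl(\sum_{\underline{d}\mid\underline{n}}\lambda_{\underline{d}}\bigr)^{2}$, swap the order of summation, and reduce the inner sum to one over $\underline{n}\equiv\underline{b}\pmod{W}$ with $[\underline{d},\underline{e}]\mid\underline{n}$. Hypothesis $\text{H2'}$ then breaks this into three pieces, producing the two main quadratic forms
\[
X \sideset{}{'}{\sum}_{\underline{d},\underline{e}}\frac{\lambda_{\underline{d}}\lambda_{\underline{e}}}{f([\underline{d},\underline{e}])}
\;+\;X^{*} \sideset{}{'}{\sum}_{\underline{d},\underline{e}}\frac{\lambda_{\underline{d}}\lambda_{\underline{e}}}{f_{*}([\underline{d},\underline{e}])}\,v([\underline{d},\underline{e}])
\]
together with a remainder controlled by $|r_{[\underline{d},\underline{e}]}|$.

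For the first main term, since $f$ satisfies $\text{H3}$ with parameters $\underline{\alpha}$, the evaluation is identical to the one carried out in Theorem~3.6 of \cite{A2016} and yields $(1+o(1))\tfrac{c(W)X}{(\log R)^{\alpha}}C(F,F)^{(\underline{\alpha})}$. The second is precisely an additive-twist form and reduces to Lemma \ref{1MVL4.2} applied with $G=H=F$, $f$ replaced by $f_{*}$, and parameters $\underline{\alpha}^{*},\beta_{j}$, producing the $C^{*}_{j}(F,F)^{(\underline{\alpha}^{*})}$ contribution. The fact that $F$ is now supported on $\Delta_{k}^{[\kappa]}(1)$ rather than $\Delta_{k}(1)$ simply restricts the range of summation to tuples with $d_{i},e_{i}\leq R^{\kappa}$; since both cited results treat arbitrary smooth compactly supported test functions, their asymptotics apply verbatim.

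The crux, and the only genuinely new point, is controlling the remainder
\[
\sideset{}{'}{\sum}_{\substack{\underline{d},\underline{e}<R\\ d_{i},e_{i}\leq R^{\kappa}\,\forall i}} |\lambda_{\underline{d}}|\,|\lambda_{\underline{e}}|\,|r_{[\underline{d},\underline{e}]}|.
\]
The reason for the smoothed choice of $\lambda_{\underline{d}}$ is precisely to force each component of $[\underline{d},\underline{e}]$ to be at most $R^{\kappa}$. With $R=X^{\frac{1}{2}(2/3+\varpi)-\delta}$ and $\kappa=\tfrac{2\eta_{0}}{2/3+\varpi}$, one has $[\underline{d},\underline{e}]\leq R^{2}\leq X^{2/3+\varpi-2\delta}$ and each component bounded by $R^{\kappa}\leq X^{\eta_{0}}$. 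Combined with the trivial bound $|\lambda_{\underline{d}}|\leq \|F\|_{\infty}$, this places the sum squarely inside the scope of hypothesis $\text{H4'}$, which delivers the required $O_{A}(X/(\log X)^{A})$ saving. This is the step that makes it essential to appeal to the Irving--Wu--Xi level-of-distribution estimate rather than the classical $\theta<2/3$ bound; without the smoothness restriction on $[\underline{d},\underline{e}]$, one would be stuck with $\varpi=0$. Collecting the three contributions then yields the claimed asymptotic.
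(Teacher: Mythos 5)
Your proposal follows the paper's proof step for step: expand the square, apply $\text{H2'}$ to split into the two main quadratic forms and a remainder, evaluate the first via \cite[Theorem 3.6]{A2016}, the second via Lemma~\ref{1MVL4.2}, and bound the remainder using $\text{H4'}$ together with the choices of $R$ and $\kappa$. You actually spell out the arithmetic ($R^{2}\leq X^{2/3+\varpi-2\delta}$, $R^{\kappa}\leq X^{\eta_{0}}$) that the paper leaves implicit, but the route is identical.
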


\section{Application to almost prime k-tuples}
 
We recall the definition of an admissible set.
 
\begin{definition} \label{Defadmisset} A set $\mathcal{H}=\{h_{1},...,h_{k}\}$ of distinct non-negative integers is said to be admissible if, for every prime $p$, there is a residue class $b_{p}\, (\bmod \ p)$ such that $b_{p}\notin \mathcal{H}\, (\bmod \ p)$.
\end{definition}
Throughout this section, we work with a fixed admissible set of size $k$, $\mathcal{H}=\{h_{1},...,h_{k}\}$, where $k$ is a sufficiently large integer.
 First we use the $W$-trick. Set $W=\prod_{p<D_{0}}p$, by the Chinese remainder theorem, we can find an integer $b$, such that $b+h_{i}$ is co-prime to $W$ for each $h_{i}$. We restrict $n$ to be in this fixed residue class $b$ modulo $W$. One can choose $D_{0}=\log \log \log N$, so that $W \sim (\log \log N)^{1+o(1)}$ by an application of the prime number theorem. We then consider the expressions,
\begin{align}  \label{Sum1}
	S_{1}=\sum_{\substack{n\sim N \\ n\equiv b \, (\! \bmod W)}}\left(\sum_{d_{j}\mid n+h_{j} \forall j}\lambda_{\underline{d}}\right)^{2},
\end{align}
\begin{align} \label{Sum2}
	S_{2}=\sum_{\substack{n\sim N \\ n\equiv b \, (\! \bmod W)}} \left( \sum_{j=1}^{k}\tau(n+h_{j})\right) \left(\sum_{d_{j}\mid n+h_{j} \forall j}\lambda_{\underline{d}} \right)^{2}.
\end{align}
For $\rho$ positive, we denote by $S(N,\rho)$ the quantity
$$\rho S_{1}-S_{2}.$$
The key point of our argument is to show, with an appropriate choice of $\lambda_{\underline{d}}$, that
$$
S(N,\rho)>0
$$
for all large $N$. This implies, there are infinitely many integers $n$ such that
$$\sum_{j=1}^{k}\tau(n+h_{j})\leq \lfloor \rho \rfloor,$$
where $\lfloor \rho \rfloor$ denotes the greatest integer less than or equal to $\rho$.

The asymptotic formula for $S_{1}$ was already derived in \cite[Lemma 4.2]{A2016}. We proceed to derive an asymptotic formula for $S_{2}.$

\subsection{An asymptotic formula for $S_{2}$}
\ 
\vspace{5px}
\\
We write
$$
S_{2}=\sum_{m=1}^{k}S_{2}^{(m)},\ \ \ \ \ S_{2}^{(m)}=\sum_{\substack{n\sim N \\ n\equiv b \, (\! \bmod  W)}} \tau(n+h_{m})\left(\sum_{d_{j}\mid n+h_{j} \forall j}\lambda_{\underline{d}} \right)^{2}
$$
In this subsection we obtain an asymptotic formula for $S_{2}^{(m)}$.

\begin{lemma} \label{smoothsum2} Assume $0< \varpi <\frac{55}{12756}$. Let $\varepsilon = \frac{55}{12756} - \varpi$,  $\eta=\eta (\varepsilon)$ be defined as in Lemma \ref{12WuXiTH1.2}, $\eta_{0}=\eta / 2$, $\kappa=\frac{2\eta_{0}}{2/3+\varpi}$. With $\lambda_{\underline{d}}$ chosen as in \eqref{test function form} and $R=N^{\frac{1}{2}(\frac{2}{3}+\varpi)-\delta}$, we have as $N\rightarrow \infty$,
\begin{align}
S_{2}^{(m)}:= & \sum_{\substack{n \sim N \\ n\equiv b \, (\! \bmod W)}}\tau(n+h_{m}) \left( \sum_{d_{j}\mid n+h_{j} \forall j}\lambda_{\underline{d}} \right)^{2}
\notag
\\
= & (1+o(1))\frac{W^{k-1}}{\varphi(W)^{k}}\frac{N}{(\log R)^{k}} \left(\frac{\log N}{\log R}\alpha^{(m)}-\beta_{1}^{(m)}-4\beta_{2}^{(m)} \right),
\notag
\end{align}
with
$$\alpha^{(m)}=\int_{\Delta_{k}(1)}t_{m} \Big(F^{(\underline{1}+e_{m})}(\underline{t}) \Big)^{2}\:\mathrm{d} \underline{t},$$
$$\beta_{1}^{(m)}=\int_{\Delta_{k}(1)}t_{m}^{2}\Big(F^{(\underline{1}+e_{m})}(\underline{t})\Big)^{2}\:\mathrm{d} \underline{t},$$
and
$$\beta_{2}^{(m)}=\int_{\Delta_{k}(1)}t_{m}F^{(\underline{1}+e_{m})}(\underline{t})F^{(\underline{1})}(\underline{t})\:\mathrm{d} \underline{t}.$$
\end{lemma}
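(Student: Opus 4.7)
The plan is to set $\omega_n = \tau(n+h_m)$ restricted to $n \sim N$ and invoke Lemma~\ref{NewLsmooth}. All the required hypotheses come from a single asymptotic evaluation of
\[
T(\underline{d}) := \sum_{\substack{n \sim N,\ n \equiv b \,(\!\bmod W) \\ d_j \mid n+h_j\ \forall j}} \tau(n+h_m);
\]
H3 and H5 are read off from the leading behavior, while H4' is supplied by Lemma~\ref{12WuXiTH1.2}, and H1 holds once $D_0 > \max_{i,j}|h_i-h_j|$.

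First I would combine the congruences by CRT into a single residue $a_m \pmod q$ with $q := W \prod_j d_j$ and substitute $n' = n+h_m$. Admissibility together with the growth of $W$ forces $\gcd(a_m + h_m,\, q/d_m) = 1$, so $\gcd(a_m + h_m, q) = d_m$ in general; the further substitution $n' = d_m n''$ converts $T(\underline{d})$ into
\[
\sum_{\substack{n'' \sim N/d_m \\ n'' \equiv a'' \,(\!\bmod Q')}} \tau(d_m n''), \qquad Q' := W \prod_{j \ne m} d_j,\ (a'', Q') = 1.
\]
Using $\sum_n \tau(d_m n) n^{-s} = \zeta(s)^2 \prod_{p \mid d_m}(2 - p^{-s})$ together with the Euler factors $(1-p^{-s})^2$ enforcing $(n, Q') = 1$, a standard Perron-type calculation gives for $x = N/d_m$
\[
\frac{1}{\varphi(Q')}\sum_{\substack{n \le x \\ (n, Q')=1}} \tau(d_m n) = \frac{\varphi(Q')}{Q'^2} \prod_{p \mid d_m}(2 - p^{-1}) \, x\bigl[\log x + C_W + \Sigma(\underline{d})\bigr] + O(x^{1/2+\epsilon}),
\]
with $\Sigma(\underline{d}) = \sum_{p \mid d_m} \frac{\log p}{2p-1} + \sum_{j\ne m}\sum_{p \mid d_j} \frac{2\log p}{p-1}$ and $C_W$ a constant depending only on $W$. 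Expanding $\log(N/d_m) = \log N - \log d_m$ and identifying factors matches H2' with
\[
f_j(p) = \frac{p^2}{p-1}\ \ (j \ne m), \qquad f_m(p) = \frac{p^2}{2p-1}, \qquad f_* = f,
\]
$X = \frac{\varphi(W)}{W^2} N(\log N + C_W')$, $X^* = \frac{\varphi(W)}{W^2} N$, and additive twist $v_j(p) = \frac{2\log p}{p-1}$ for $j \ne m$, $v_m(p) = \frac{\log p}{2p-1} - \log p$. Then H3 gives $\alpha_j = 1,\ \alpha_m = 2$, so $\underline{\alpha}^* = \underline{\alpha} = \underline{1}+e_m$ and $\alpha = k+1$; H5 yields $\beta_j = 0$ for $j \ne m$ (absolute convergence of $\sum_p v_j(p) p^{-1-\delta}$) and $\beta_m = -1$ (since $v_m(p) = -\log p + O(\log p / p)$ and $\sum_p \log p\, p^{-1-\delta} = 1/\delta + O(1)$).

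For H4', the decomposition $m = m_1 m_2$ with $m_1 \mid d_m^\infty$ and $(m_2, d_m) = 1$ together with the identity $\tau(d_m m) = \prod_{p \mid d_m}(v_p(m_1)+2) \cdot \tau(m_2)$ expresses $r_{\underline{d}}$ as a weighted sum of classical error terms for $\tau$ in arithmetic progressions to moduli dividing $W\prod_j[d_j,e_j]$ with coprime residues; since $d_i, e_i \le R^\kappa = N^{\eta/2}$ and $W\prod_j[d_j,e_j] \leq N^{2/3+\varpi-\varepsilon}$, the moduli are automatically $q^\eta$-smooth throughout the extended range, and Lemma~\ref{12WuXiTH1.2} supplies the required $N/(\log N)^A$ saving. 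Lemma~\ref{NewLsmooth} now produces
\[
S_2^{(m)} = (1+o(1))\frac{c(W)X}{(\log R)^{k+1}} C(F,F)^{(\underline{1}+e_m)} + (1+o(1))\frac{c^*(W)X^*}{(\log R)^k}\, \beta_m \alpha_m^* C_m^*(F,F)^{(\underline{1}+e_m)},
\]
where the sum over $j$ in the second term collapses to $j=m$ because $\beta_j = 0$ otherwise. From the Section~2 definitions one reads off
\[
C(F,F)^{(\underline{1}+e_m)} = \int t_m \bigl(F^{(\underline{1}+e_m)}\bigr)^2\, d\underline{t} = \alpha^{(m)},
\]
and unpacking the three pieces that define $C_m^*$ gives $C_m^*(F,F)^{(\underline{1}+e_m)} = \tfrac12 \beta_1^{(m)} + 2\beta_2^{(m)}$. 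With $\beta_m \alpha_m^* = -2$ the second main term equals $-(1+o(1)) \frac{c^*(W)X^*}{(\log R)^k}(\beta_1^{(m)} + 4\beta_2^{(m)})$; finally $c(W)X \sim \frac{W^{k-1}}{\varphi(W)^k} N\log N$ and $c^*(W)X^* = \frac{W^{k-1}}{\varphi(W)^k} N$, and collecting yields the advertised formula. The principal obstacle is the non-coprimality $\gcd(a_m+h_m, q) = d_m$ of the shifted residue, which forces the reductions $n' = d_m n''$ and $m = m_1 m_2$ before Lemma~\ref{12WuXiTH1.2} can be applied in coprime form.
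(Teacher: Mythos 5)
Your general architecture is correct: you set $\omega_n = \tau(n+h_m)$, read off the H2' data (up to a sign convention on $X^*$ and $v$, which is internally consistent and matches the paper after flipping both), deduce the H3 and H5 parameters, feed everything into Lemma~\ref{NewLsmooth}, and correctly unwind the $C$-integrals to land on the stated formula. In particular your evaluation $C_m^*(F,F)^{(\underline{1}+e_m)} = \tfrac12\beta_1^{(m)} + 2\beta_2^{(m)}$ and the bookkeeping with $c(W), c^*(W)$ agree with the paper. The CRT observation $\gcd(a,q)=d_m$ (with $a$ the combined residue for $n+h_m$) is also exactly what drives the paper's introduction of $E'$.

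The genuine gap is in your verification of H4', which is the technical core of this lemma and which you compress to a single sentence that does not hold up. Lemma~\ref{12WuXiTH1.2} requires the modulus to be \emph{$q^{\eta}$-smooth in terms of its own size}, not $N^{\eta}$-smooth. After removing the gcd $\delta = d_m$ via your decomposition $m = m_1m_2$ (or the paper's Hooley identity $E'(N,q,a)=\tau(\delta)\sum_{d\mid\delta}\frac{\mu(d)}{\tau(d)}E(N/\delta d, q/\delta, a_d)$), the effective modulus is $q/\delta$, and a modulus built from $d_i, e_i \le N^{\eta_0}$ is only $N^{\eta_0}$-smooth. For this to imply $(q/\delta)^{\eta}$-smoothness one needs $N^{\eta_0} \le (q/\delta)^{\eta}$, i.e.\ $q/\delta \ge N^{1/2}$, and that fails when $\delta$ is large or when $q$ itself is small. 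Your claim that ``the moduli are automatically $q^{\eta}$-smooth throughout the extended range'' is therefore not justified as stated. The paper handles this with a three-way case split over the extended range $N^{2/3-\epsilon} < q \le N^{2/3+\varpi-\epsilon}$: if $d \ge N^{\epsilon/2}$ a crude bound suffices; if $d < N^{\epsilon/2}$ and $\delta > N^{4\varpi}$ it invokes the Weil-bound estimate~\eqref{SHLR}; and only in the remaining case $d < N^{\epsilon/2}$, $\delta \le N^{4\varpi}$, where one can check $N^{\eta_0} \le (q/\delta)^{\eta}$, does Lemma~\ref{12WuXiTH1.2} apply.

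You also do not address the range $q \le N^{2/3-\epsilon}$ at all. There Lemma~\ref{12WuXiTH1.2} is not invoked; instead the paper applies the Murty--Vatwani Bombieri--Vinogradov-type bound for $E'$ together with a Cauchy--Schwarz step to absorb the $\tau_{3k}(q)$ weight, using the pointwise estimate $|E'(N,q,a)| \ll \tau(q)^2 N\log^2 N / q$ valid in that range. Without both the small-$q$ treatment and the careful case analysis on $\delta$ and $d$ in the extended range, the claimed $N/(\log N)^A$ saving for H4' is not established, and Lemma~\ref{NewLsmooth} cannot be applied. Finally, the auxiliary error term $O(d_m^{1/2}q^{\epsilon-1}\sqrt N)$ coming from the transition to coprime residues also needs to be summed and shown to be negligible, which your write-up omits.
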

\begin{proof}  Following the same argument as in    \cite[Lemma 5.10]{M-A2017}, we can show that $\text{H1}$, $\text{H2'}$, $\text{H3}$ and $\text{H5}$ holds. The variables in $\text{H2'}$ satisfy
$$X=\frac{\varphi(W)}{W^2}N\left(\log N+2\gamma -1+\sum_{p\mid W}\frac{2\log p}{p-1} \right),\ \ \ X^*=-\frac{\varphi(W)}{W^2}N,$$
$$f(\underline{d})=f_{*}(\underline{d})=\frac{\varphi(d_{m})}{d_{m}\tau(d_{m})}\prod_{p\mid d_{m}}\left(\frac{2p}{2p-1}\right)\prod_{j=1}^{k}\frac{d_{j}^{2}}{\varphi (d_{j})},$$
$$v(\underline{d})=\log d_{m}-\sum_{p\mid d_{m}}\frac{\log p}{2p-1}-\sum_{j\neq m}\sum_{p\mid d_{j}}\frac{2\log p}{p-1},$$
$$r_{\underline{d}}=E'(N,q,a)+O(d_{m}^{1/2}q^{\epsilon-1}\sqrt{N}),$$
where 
$$
q=W\prod_{j=1}^{k} d_{j},
$$ 
\begin{align}  \label{EFD'} 
E'(N,q,a)=\tau(\delta)\sum_{d \mid \delta}\frac{\mu(d)}{\tau(d)}E(N/\delta d,q',a_{d}),
\end{align}
with $\delta=(a,q), q'=q/ \delta, a_{d} \equiv   a \overline{\delta d} \,(\bmod \, q')$. Here $\overline{\delta d}$ is the inverse of $\delta d$ modulo $q'$ and $a$ is some integer depending on $b$, $m$, $\underline{d}$, and $W$. 
\vspace{5px}
\\
\indent $\text{H3}$ holds for $f$ and $f_{*}$ with
\begin{equation}  \label{proH3} \alpha_{j}=\alpha_{j}^{*}=\left\{
	\begin{array}{rcl}
		1     &      & \mbox{if}\ \  j=1,\cdot\cdot\cdot ,k,\ \ j\neq m,      \\
		2               &      & \mbox{if}\ \  j=m.
	\end{array} \right.
\end{equation}
\indent $\text{H5}$ holds for the additive function $v$ with $\beta_{j}$, given by
$$v_{j}(p)=-\frac{2\log p}{p-1}\ \ \ \mbox{for}\ \ j\neq m,\ \ \ \ \ v_{m}(p)=\log p-\frac{\log p}{2p-1},$$
and
\begin{equation}  \label{proH5}  \beta_{j}=\left\{
	\begin{array}{rcl}
		0     &      & \mbox{if} \ \  j=1,\cdot\cdot\cdot ,k,\ \ j\neq m,       \\
		1               &      & \mbox{if} \ \  j=m.
	\end{array} \right.
\end{equation}
\vspace{5px}
\\
\indent We give details to verify $\text{H4'}$. In fact, it suffices to show that for any $A>0, \epsilon >0$,
\begin{align}  \label{mainH4'} 
	\sideset{}{'}{\sum}_{\substack{[\underline{d},\underline{e}]\leq N^{2/3+\varpi-\epsilon} \\ d_{j},e_{j}\leq N^{\eta_{0}} \, \forall j}} \left|E'(N,q,a) \right|
+O\left(\sideset{}{'}{\sum}_{\substack{[\underline{d},\underline{e}]\leq N^{2/3+\varpi-\epsilon} \\d_{j},e_{j}\leq N^{\eta_{0}} \, \forall  j}}[d_{m},e_{m}]^{1/2}q^{\epsilon-1} \sqrt{N}\right) \ll \frac{N}{(\log N)^{A}},
\end{align}
where $q=W \prod_{j} [d_{j}, e_{j}]$. Denoting $\prod_{j\neq m}[d_{j},e_{j}]$ as $[\underline{d},\underline{e}]_{m}$, we have
\begin{align}
\sideset{}{'}{\sum}_{\substack{[\underline{d},\underline{e}]\leq N^{2/3+\varpi-\epsilon} \\d_{i},e_{i}\leq N^{\eta_{0}} \, \forall  j}}[d_{m},e_{m}]^{1/2}q^{\epsilon-1} &\ll \sideset{}{'}{\sum}_{[\underline{d},\underline{e}]\leq N^{2/3+\varpi} }[d_{m},e_{m}]^{1/2}q^{\epsilon-1}
\notag
\\
&\ll W^{\epsilon-1}\sum_{[d_{m},e_{m}]\leq N^{2/3+\varpi}}[d_{m},e_{m}]^{\epsilon-1/2}\sum_{\substack{[\underline{d},\underline{e}]_{m}\leq N^{2/3+\varpi}\\ [\underline{d},\underline{e}]_{m} \, \mbox{\footnotesize square-free}}}([\underline{d},\underline{e}]_{m})^{\epsilon-1}.
\notag
\end{align}
Using \cite[Proposition 3.1]{A2016} and partial summation along with the fact that the average order of $\tau_{3}(n)$ is $(\log n)^2$, we get
$$\sum_{[d_{m},e_{m}]\leq N^{2/3+\varpi}}[d_{m},e_{m}]^{\epsilon-1/2}\ll \sum_{r\leq N^{2/3+\varpi}}r^{\epsilon-1/2}\tau_{3}(r)\ll(N^{2/3+\varpi})^{\epsilon+1/2}(\log N)^2.$$
Similarly,
\begin{align}
	\sum_{\substack{[\underline{d},\underline{e}]_{m} \leq N^{2/3+\varpi}\\ [\underline{d},\underline{e}]_{m} \, \mbox{\footnotesize square-free}}}([\underline{d},\underline{e}]_{m})^{\epsilon-1}
	\ll \sum_{r\leq N^{2/3+\varpi}}r^{\epsilon-1}\tau_{3(k-1)}(r)\ll(N^{2/3+\varpi})^{\epsilon}(\log N)^{3k}.
	\notag
\end{align}
As $\epsilon$ can be arbitrarily small and $W\ll (\log \log N)^2$, we obtain,
$$\sideset{}{'}{\sum}_{\substack{[\underline{d},\underline{e}]\leq N^{2/3+\varpi-\epsilon} \\d_{j},e_{j}\leq N^{\eta_{0}} \, \forall  j}}[d_{m},e_{m}]^{1/2}q^{\epsilon-1} \sqrt{N} \ll N^{\epsilon'+(2/3+\varpi)/2}\sqrt{N},$$
for any $\epsilon'>0$. As $2/3+\varpi<1$, this term is indeed of the order of $N(\log N)^{-A}$ for any $A>0$ as required. We now only need to consider the first term of \eqref{mainH4'}. It can be bounded by
\begin{align}  \label{NEH4'} 
& \sum_{\substack{q\leq WN^{2/3+\varpi-\epsilon} \\ q\ | \prod\limits_{p\leq N^{\eta_{0}}}p}} \tau_{3k}(q) \max_{a \, (\! \bmod q)} \left | E'(N,q,a)\right|
\notag
\\
\ll & \left( \sum_{q\leq N^{2/3-\epsilon}} +\sum_{\substack{N^{2/3-\epsilon}<q\leq N^{2/3+\varpi-\epsilon} \\ q\ | \prod\limits_{p\leq N^{\eta_{0}}}p}} \right) \mu(q)^{2} \tau_{3k}(q) \max_{a \, (\! \bmod q)} \left| E'(N,q,a) \right|
\end{align}
We first deal with the first term of \eqref{NEH4'}. Note that  \eqref{EFD'} gives
$$E'(N,q,a)=\tau(\delta)\sum_{d\mid \delta}\frac{\mu(d)}{\tau(d)}E \left(\frac{N}{\delta d},\frac{q}{\delta},a_{d} \right),$$
where $\delta=(a,q).$
\\
If $\left(\frac{N}{\delta d}\right)^{2} > \frac{q}{\delta}$, we find by \eqref{SHLR} 
$$
\left| E \left( \frac{N}{\delta d},\frac{q}{\delta},a_{d} \right)  \right| \ll \left(\frac{q}{\delta}\right)^{-\frac{1}{4}} \left(\frac{N}{d \delta} \right)^{\frac{1}{2}+\frac{\epsilon}{4}} \ll q^{-\frac{1}{4}} N^{\frac{1}{2}+\frac{\epsilon}{4}} \ll \frac{N}{q},
$$
provided $q < N^{\frac{2}{3}- \frac{\epsilon}{3}}$.
\\
If $\left(\frac{N}{\delta d}\right)^{2}  \leq \frac{q}{\delta}$, we use the trivial bound to obtain
\begin{align*}
	\left| E \left( \frac{N}{\delta d},\frac{q}{\delta},a_{d} \right)  \right|  & \leq \left| \sum_{\substack{n\leq \frac{N}{\delta d} \\ n\equiv a_{d}\, (\! \bmod q \delta^{-1})}}  \tau(n)\right |+\frac{1}{\varphi(q \delta^{-1})} \left| \sum_{\substack{n \leq \frac{N}{\delta d} \\ (n,q \delta^{-1})=1}} \tau(n) \right|
	\notag
	\\
	& \leq N^{\epsilon} + \frac{N}{\delta d} \log N \frac{\log q \delta^{-1}}{q \delta^{-1}} \ll \frac{N \log ^{2} N}{q},
\end{align*}
where we used the estimate $\frac{1}{\varphi (n)} \ll \frac{\log n}{n}$. Therefore, we obtain 
$$
	\left| E' \left( N,q,a \right)  \right| \ll \tau(\delta)^{2}\frac{N \log ^{2} N}{q} \ll \tau(q)^{2}\frac{N \log ^{2} N}{q}
$$
for $q\leq N^{2/3-\epsilon}$.
\\
On the other hand, \cite[Theorem 5.9]{M-A2017} gives
$$ \sum_{q\leq N^{\theta}} \mu(q)^{2} \max_{y \leq N} \max_{a \, (\! \bmod q)} |E'(y,q,a)| \ll \frac{N}{(\log N)^{A'}}
$$
for any $A' >0$ and $\theta < 2/3$. Hence, 
by Cauchy-Schwarz, the first term of \eqref{NEH4'} is bounded by
\begin{align}  \label{Firterm(5.7)} 
&\sum_{q\leq N^{2/3-\epsilon}} \mu(q)^{2} \tau_{3k}(q) \max_{a \, (\! \bmod q)} \left|  E'(N,q,a) \right| 
\notag
\\
 \ll & \left(\sum_{q\leq N^{2/3-\epsilon}} \mu(q)^{2} \tau_{3k}(q)^{2} \tau(q)^{2}\frac{N \log ^{2} N}{q} \right)^{\frac{1}{2}} \left(\sum_{q\leq N^{2/3-\epsilon}} \mu(q)^{2} \max_{a \, (\! \bmod q)} |E'(N,q,a)| \right)^{\frac{1}{2}}
\notag
\\
\ll & \frac{N}{(\log N)^{A}}
\end{align}
for any $A>0$, as $N\rightarrow \infty$.
\\
\indent We now turn to the second term of \eqref{NEH4'}. 
In order to estimate $E'(N,q,a)$ for $N^{2/3-\epsilon}<q \leq N^{2/3+\varpi-\epsilon}$, $q \mid \prod_{p\leq N^{\eta_{0}}}p$, we consider three cases.
\\
If $d\geq N^{\epsilon / 2}$, the crude bound gives
\begin{align}  \label{case1secterm(5.7)} 
\left | E \left( \frac{N}{\delta d},\frac{q}{\delta},a_{d} \right) \right| 
& \leq \left| \sum_{\substack{n\leq \frac{N}{\delta d} \\ n\equiv a_{d}\, (\! \bmod q \delta^{-1})}}  \tau(n)\right |+\frac{1}{\varphi(q \delta^{-1})} \left| \sum_{\substack{n \leq \frac{N}{\delta d} \\ (n,q \delta^{-1})=1}} \tau(n) \right|
\notag
\\
& \ll N^{\frac{\epsilon}{4}} \left(\frac{N}{\delta d}\cdot \frac{\delta}{q}+1 \right)+\frac{\delta}{q}\cdot \frac{N}{\delta d}\cdot  N^{\frac{\epsilon}{4}}
\notag
\\
& \ll \frac{N^{1-\frac{\epsilon}{4}}}{q}.
\end{align}
If $d< N^{\epsilon/2}, \delta> N^{4\varpi}$,  we obtain by \eqref{SHLR},
\begin{align}  \label{case2secterm(5.7)} 
\left| E \left(\frac{N}{\delta d},\frac{q}{\delta},a_{d} \right) \right | & \ll \left(\frac{q}{\delta} \right)^{-\frac{1}{4}} \left(\frac{N}{\delta d} \right)^{\frac{1}{2}+\epsilon}
\ll \left(\frac{q}{\delta}\right)^{-\frac{1}{4}} \left(\frac{N}{\delta}\right)^{\frac{1}{2}+\epsilon}
 \ll q^{-\frac{1}{4}}N^{\frac{1}{2}-\varpi-4\varpi\epsilon+\epsilon}.
\end{align}
Finally, if $d< N^{\epsilon/2}, \delta\leq N^{4\varpi}$,
we have
$$\frac{q}{\delta}\leq \frac{N^{\frac{2}{3}+\varpi-\epsilon}}{\delta}\leq \left(\frac{N}{\delta d} \right)^{\frac{2}{3}+\varpi} = \left(\frac{N}{\delta d} \right)^{\frac{2}{3}+\frac{55}{12756}-\varepsilon},$$
and
$$N^{\eta_{0}}= N^{\frac{\eta}{2}} \leq \left(\frac{N^{\frac{2}{3}-\epsilon}}{N^{4\varpi}}\right)^{\eta} \leq \left(\frac{q}{\delta} \right)^{\eta}.$$
As $\epsilon$ can be made arbitrarily small. We obtain by Lemma \ref{12WuXiTH1.2}, 
\begin{align}  \label{case3secterm(5.7)} 
	\left| E \left(\frac{N}{\delta d},\frac{q}{\delta},a_{d} \right) \right|  \ll_{\varpi} \left(\frac{N}{\delta d} \right)^{1-\delta'}\frac{\delta}{q}
     \ll \frac{N^{1-\delta'}\delta^{\delta'}}{q}
	 \ll \frac{N^{1-\delta'+4\varpi\delta'}}{q},
\end{align}
where $\delta'$ is some positive number depending on $\varpi$.
\\
Hence, it follows by \eqref{EFD'}, \eqref{case1secterm(5.7)}, \eqref{case2secterm(5.7)}, and \eqref{case3secterm(5.7)} that
\begin{align}  \label{secterm(5.7)} 
& \sum_{\substack{N^{2/3-\epsilon}<q\leq N^{2/3+\varpi-\epsilon/2} \\ q\ | \prod\limits_{p\leq N^{\eta_{0}}}p}} \tau_{3k}(q) \max_{a \, (\! \bmod q)} \left| E'(N,q,a) \right|
\notag
\\
\ll & \sum_{N^{2/3-\epsilon}<q \leq N^{2/3+\varpi-\epsilon/2}}\tau_{3k}(q)\tau(\delta)^{2}\left(\frac{N^{1-\frac{\epsilon}{4}}}{q}+q^{-\frac{1}{4}}N^{\frac{1}{2}-\varpi-4\varpi\epsilon+\epsilon}
+\frac{N^{1-\delta'+4\varpi\delta'}}{q} \right)
\notag
\\
\ll & \frac{N}{(\log N)^{A}}
\end{align}
for any $A>0$, as $N\rightarrow \infty$.
\\
This concludes the verification of $\text{H4'}$.
\\
\indent As the choice of $D_{0}$ gives 
$$
\frac{\log D_{0}}{\log R}=o(1),
$$ 
we are now in a position to apply Lemma \ref{NewLsmooth}. The remaining argument is the same as \cite[Lemma 5.10]{M-A2017}. This completes the proof.
\end{proof}
\vspace{5px}

Noting that our choice of $\lambda_{\underline{d}}$ satisfies the conditions of \cite[Lemma 4.2]{A2016}, we combine the above lemma with the asymptotic formula for $S_{1}$ obtained in \cite[Lemma 4.2]{A2016}.
\vspace{5px}

\begin{lemma}  \label{lemsum1sum2} Assume $0< \varpi <\frac{55}{12756}$. Let $\varepsilon = \frac{55}{12756} - \varpi$,  $\eta=\eta (\varepsilon)$ be defined as in Lemma \ref{12WuXiTH1.2}, $\eta_{0}=\eta / 2$, $\kappa=\frac{2\eta_{0}}{2/3+\varpi}$. With $\lambda_{\underline{d}}$ chosen as in \eqref{test function form} and $R=N^{\frac{1}{2}(\frac{2}{3}+\varpi)-\delta}$, we have as $N \rightarrow \infty$, 
$$
S(N,\rho):=\rho S_{1}-\sum_{m=1}^{k}S_{2}^{(m)}=(1+o(1))\frac{W^{k-1}}{\varphi(W)^{k}}\frac{N}{(\log R)^{k}} \Big(\rho I(F)-\frac{\alpha^{*}}{(\frac{1}{2}(\frac{2}{3}+\varpi)-\delta)}+\beta_{1}^{*}+4\beta_{2}^{*} \Big),
$$
with
$$\alpha^{*}=\sum_{i=1}^{k}\alpha^{(m)}=k\alpha^{(k)},\ \ \ \beta_{1}^{*}=\sum_{i=1}^{k}\beta_{1}^{(m)}=k\beta_{1}^{(k)},\ \ \ \beta_{2}^{*}=\sum_{i=1}^{k}\beta_{2}^{(m)}=k\beta_{2}^{(k)},$$
and
$$I(F)=\int_{\Delta_{k}(1)}(F^{(\underline{1})}(\underline{t}))^{2}\:\mathrm{d}\underline{t}.$$
\end{lemma}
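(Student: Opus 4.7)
The plan is to combine the asymptotic formula for $S_1$ from \cite[Lemma 4.2]{A2016} with the formula for $S_2^{(m)}$ just established in Lemma \ref{smoothsum2}. Since the chosen $\lambda_{\underline{d}}$ and $R$ satisfy the hypotheses of \cite[Lemma 4.2]{A2016}, that lemma gives
$$S_1 = (1+o(1))\frac{W^{k-1}}{\varphi(W)^{k}}\frac{N}{(\log R)^{k}}\,I(F).$$
Summing the conclusion of Lemma \ref{smoothsum2} over $m=1,\ldots,k$ and writing $\alpha^{*}=\sum_{m}\alpha^{(m)}$, $\beta_i^{*}=\sum_{m}\beta_i^{(m)}$ gives
$$S_2 = \sum_{m=1}^{k}S_2^{(m)} = (1+o(1))\frac{W^{k-1}}{\varphi(W)^{k}}\frac{N}{(\log R)^{k}}\Big(\frac{\log N}{\log R}\alpha^{*}-\beta_1^{*}-4\beta_2^{*}\Big).$$
Forming $\rho S_1-S_2$ and factoring out the common $\frac{W^{k-1}}{\varphi(W)^{k}}\frac{N}{(\log R)^{k}}$ immediately yields the shape of the claimed identity, with bracket $\rho I(F)-\frac{\log N}{\log R}\alpha^{*}+\beta_1^{*}+4\beta_2^{*}$.

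Next I would use the symmetry of the test function $F$ on $\Delta_k(1)$. Because $F$ is invariant under permutations of its arguments, relabelling the integration variables in the definitions of $\alpha^{(m)}$, $\beta_1^{(m)}$ and $\beta_2^{(m)}$ shows that each is independent of $m$. Consequently $\alpha^{*}=k\alpha^{(k)}$, $\beta_1^{*}=k\beta_1^{(k)}$ and $\beta_2^{*}=k\beta_2^{(k)}$, exactly as stated.

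Finally, the choice $R=N^{\frac{1}{2}(\frac{2}{3}+\varpi)-\delta}$ forces
$$\frac{\log N}{\log R}=\frac{1}{\frac{1}{2}(\frac{2}{3}+\varpi)-\delta},$$
and substituting this into the bracket reproduces the expression for $S(N,\rho)$ in the lemma. I do not anticipate any real obstacle here: the proof is a short bookkeeping argument that chains Lemma \ref{smoothsum2} with \cite[Lemma 4.2]{A2016}. The one point worth verifying is that both input asymptotics carry the same leading constant $\frac{W^{k-1}}{\varphi(W)^{k}}\frac{N}{(\log R)^{k}}$; this follows from the explicit form $c(W)=W^{k}/\varphi(W)^{k}$ appearing in Lemma \ref{NewLsmooth} with $\alpha=k$, together with the normalization $X\sim \frac{\varphi(W)}{W}N$ produced by the $W$-trick in the $S_1$ setup, which contributes the extra factor $\varphi(W)/W$ needed to match the exponent $k-1$ on $W$.
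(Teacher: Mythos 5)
Your proposal follows exactly the paper's (implicit) route: $S_1$ is taken from \cite[Lemma 4.2]{A2016}, $S_2$ is the sum of the $S_2^{(m)}$ from Lemma \ref{smoothsum2}, and the symmetry of $F$ gives $\alpha^{*}=k\alpha^{(k)}$ etc.; the paper itself offers no further detail, so your bookkeeping is a faithful expansion of what the paper leaves to the reader.

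One small slip in your sanity check of the constant: for the $S_1$ sum the weight is $\omega_{\underline{n}}=1$ and the $W$-trick fixes a single residue class $b\,(\bmod W)$, so the density is $1/W$, not $\varphi(W)/W$; that is, $X=N/W$ and $f(\underline d)=\prod_i d_i$, so that $c(W)X = \dfrac{W^{k}}{\varphi(W)^{k}}\cdot\dfrac{N}{W}=\dfrac{W^{k-1}}{\varphi(W)^{k}}N$, which does match the prefactor in the statement. With your claimed $X\sim\frac{\varphi(W)}{W}N$, the product $c(W)X$ would come out as $\frac{W^{k-1}}{\varphi(W)^{k-1}}N$, off by one in the exponent of $\varphi(W)$. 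The rest of the argument (summing over $m$, using symmetry of $F$ to make $\alpha^{(m)},\beta_1^{(m)},\beta_2^{(m)}$ independent of $m$, and substituting $\log N/\log R = \bigl(\tfrac12(\tfrac23+\varpi)-\delta\bigr)^{-1}$) is correct.
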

\vspace{7px}

\subsection{The choice of the test function}
\ 
\vspace{5px}
\\
Now, we adopt some ideas from \cite{J2015} and \cite{L-P2015} to choose a suitable smooth function $F$.
\vspace{5px}

Let $T=\frac{k}{\log \log k}$. Define the function $g\!: [0,\infty)\rightarrow \mathbb{R}$ by
\begin{equation}  \label{testfg} g(t):=\left\{
	\begin{array}{rcl}
		e^{-\frac{t}{2}}\big(1-\frac{t}{T}\big),     &      &\mbox{if} \ \ t\leq T,       \\
		0,               &      &\mbox{if}\ \  t>T,
	\end{array} \right.
\end{equation}
and the simplex set
$$
\Delta_{k}(r):= \{(t_{1},\cdot\cdot\cdot,t_{k}) \in [0, \infty)^{k} : t_{1}+\cdot\cdot\cdot +t_{k}\leq r\}.
$$
Let $h_{1}(t_{1},\cdot\cdot\cdot,t_{k}) \!: [0,\infty)^{k} \rightarrow \mathbb{R}$ be a smooth function with $| h_{1}(t_{1},\cdot\cdot\cdot,t_{k}) | \leq1$ such that
\begin{equation} \label{smfh1}  h_{1}(t_{1},\cdot\cdot\cdot,t_{k})=\left\{
	\begin{array}{rcl}
		1,    &      &\mbox{if} \ \ (t_{1},\cdot\cdot\cdot,t_{k})\in \Delta_{k}(1-\delta_{1}),       \\
		0,              &      & \mbox{if}\ \  (t_{1},\cdot\cdot\cdot,t_{k})\notin \Delta_{k}(1),
	\end{array} \right.
\end{equation}
where $\delta_{1} >0$ is a small constant to be chosen soon.
\\ 
Furthermore, we may assume that
\begin{align} \label{deh1} 
\left| \frac{\partial h_{1}}{\partial t_{j}}(t_{1},\cdot\cdot\cdot,t_{k})\right| \leq \frac{1}{\delta_{1}}+1
\end{align}
for each $(t_{1},\cdot\cdot\cdot,t_{k})\in \Delta_{k}(1)\setminus \Delta_{k}(1-\delta_{1})$ and $1\leq j \leq k$.
\\
\indent Let $h_{2}(t) \! :[0,\infty)\rightarrow \mathbb{R}$ be a smooth function with $| h_{2}(t) | \leq 1$ such that
\begin{equation} \label{smfh2}  h_{2}(t)=\left\{
	\begin{array}{rcl}
		1,     &      &\mbox{if} \ \ 0\leq t\leq T-\delta_{2},       \\
		0,               &      & \mbox{if} \ \  t>T,
	\end{array} \right.
\end{equation}
where $\delta_{2} < 1$ is a small positive constant to be chosen later. We may also assume that
\begin{align} \label{deh2} 
	| h_{2}'(t) | \leq \frac{1}{\delta_{2}}+1
\end{align}
for each $T-\delta_{2}\leq t \leq T$. 
\\
Finally, we define the function $F \! : [0,\infty)^{k} \rightarrow \mathbb{R} $ by
\begin{align} \label{testF} 
	F(\underline{t})=(-1)^{k}\int_{t_{1}}^{\infty}\cdot\cdot\cdot \int_{t_{k}}^{\infty}h_{1}(\underline{t})\prod_{j=1}^{k}h_{2}(kt_{j})g(kt_{j})\:\mathrm{d} \underline{t},\ \ \ \mbox{for} \ \underline{t}\in [0,\infty)^{k}.
\end{align}
As $h_{1}(\underline{t})\prod_{j=1}^{k}h_{2}(kt_{j})g(kt_{j})$ is a smooth function supported on $\Delta_{k}^{[\frac{T}{k}]}(1)$, we obtain that $F(\underline{t})$ is also a smooth function supported on $\Delta_{k}^{[\frac{T}{k}]}(1)$ and
\begin{equation} \label{deF} 
	F^{(\underline{1})}(\underline{t})=h_{1}(\underline{t})\prod_{j=1}^{k}h_{2}(kt_{j})g(kt_{j}).
\end{equation}
In view of Lemma \ref{lemsum1sum2}, our main goal for the remainder of this section becomes to estimate  $\alpha^{(k)}$,  $\beta_{1}^{(k)}$, $\beta_{2}^{(k)}$ and $I(F)$.
\\
\begin{remark}
The idea of choosing the derivative of the test function $F$ to be of the form \eqref{deF} is due to Maynard {\cite[Section 7]{J2015}}. Our introduction of smooth functions $h_{1}$ and $h_{2}$ here is inspired by the work of H. Li and H. Pan \cite{L-P2015}. 
\end{remark}

Before proceeding further, we mention some numerical results for integrals related to the function $g$. 
\begin{equation} \label{NRg1} 
	\int_{0}^{T} g(t)^{2} \:\mathrm{d} t =1-2(T + e^{-T} -1) T^{-2},
\end{equation}
\begin{equation} \label{NRg2} 
	\int_{0}^{T} tg'(t)^{2} \:\mathrm{d} t = \frac{1}{4} -\frac{Te^{-T} + e^{-T}-1}{2T^{2}},
\end{equation}
\begin{equation} \label{NRg3} 
	\int_{0}^{T} t g(t)^{2} \:\mathrm{d} t =1 + (6 - 4T- 2Te^{-T} -6e^{-T}) T^{-2}.
\end{equation}

\vspace{7px}

\subsection{An upper bound for $\alpha^{(k)}$}
\ 
\vspace{5px}
\\
Throughout the remainder of this article, any constants implied by the notion $O$ or $\ll$ are absolute. 
\\
We have
\begin{align} \label{ThreepdeF} 
	F^{(\underline{1}+e_{k})}(\underline{t})= & \frac{\partial F^{(\underline{1})}}{\partial t_{k}}(\underline{t})
	\notag
	\\
	= & \frac{\partial h_{1}}{\partial t_{k}}(\underline{t})\prod_{j=1}^{k}h_{2}(kt_{j})g(kt_{j})+kh_{1}(\underline{t})h_{2}'(kt_{k})g(kt_{k})\prod_{j=1}^{k-1}h_{2}(kt_{j})g(kt_{j})
	\notag
	\\
	& +  kh_{1}(\underline{t})h_{2}(kt_{k})g'(kt_{j})\prod_{j=1}^{k-1}h_{2}(kt_{j})g(kt_{j})
	\notag
	\\
	= : & I_{1} + I_{2} + I_{3}.
\end{align} 
By the definition of $h_{1}$, we find 
\begin{align} \label{Firarph1} 
	\int_{\Delta_{k}(1)}I_{1}^{2}t_{k}d\underline{t} & = \int_{\Delta_{k}(1)\backslash \Delta_{k}(1-\delta_{1})}t_{k} \left(\frac{\partial h_{1}}{\partial t_{k}}(\underline{t}) \right)^{2}\prod_{j=1}^{k}e^{-kt_{j}} \left(1-\frac{kt_{j}}{T} \right)^2h_{2}(kt_{j})^{2}\:\mathrm{d} \underline{t}
	\notag
	\\
	& \leq \left(1+\frac{1}{\delta_{1}} \right)^2\int_{\Delta_{k}(1)\backslash \Delta_{k}(1-\delta_{1})}t_{k}e^{-kt_{k}} \left(1-\frac{kt_{k}}{T} \right)^2h_{2}(kt_{k})^{2}\prod_{j=1}^{k-1} h_{2}(kt_{j})^{2}g(kt_{j})^{2}\:\mathrm{d}\underline{t}
	\notag
	\\
	& \leq \frac{1}{ke}\left(1+\frac{1}{\delta_{1}} \right)^2\int_{\Delta_{k}(1)\backslash \Delta_{k}(1-\delta_{1})}\prod_{j=1}^{k-1}h_{2}(kt_{j})^{2} g(kt_{j})^{2} \:\mathrm{d} \underline{t}.
\end{align}
In the last step we used $\max_{u\geq 0}ue^{-ku}=\frac{1}{ke}.$
Let $r=t_{1} + \cdot \cdot \cdot +t_{k}$, it follows
\begin{align} \label{Firarph2} 
	 \int_{\Delta_{k}(1)\backslash \Delta_{k}(1-\delta_{1})}\prod_{j=1}^{k-1}h_{2}^{2}(kt_{j})g^{2}(kt_{j}) \:\mathrm{d} \underline{t}
	\leq & \int_{\Delta_{k-1}(1)} \left(\int_{1-\delta_{1}}^{1}dr \right)\prod_{j=1}^{k-1}g(kt_{j})^{2}h_{2}(kt_{j})^2\:\mathrm{d} t_{1} \cdot \cdot \cdot \:\mathrm{d} t_{k-1}
	\notag
	\\
	\leq & \frac{\delta_{1}\Upsilon^{k-1}}{k^{^{k-1}}},
\end{align}
where $\Upsilon =\int_{0}^{T}g(t)^2 \:\mathrm{d}t.$
\\
We conclude that
\begin{align} \label{Firarphcon} 
\int_{\Delta_{k}(1)}I_{1}^{2}t_{k} \:\mathrm{d} \underline{t}\ll \frac{1}{k\delta_{1}}\cdot \frac{\Upsilon^{k-1}}{k^{^{k-1}}}.
\end{align}
from \eqref{Firarph1} and \eqref{Firarph2}.
\\
Regarding the upper bound for $\int_{\Delta_{k}(1)}I_{2}^{2}t_{k}\:\mathrm{d}\underline{t}$, we find

\begin{align} \label{secarphcon} 
	\int_{\Delta_{k}(1)}I_{2}^{2}t_{k}\:\mathrm{d}\underline{t}
	\leq & \left(1+\frac{1}{\delta_{2}} \right)^2\int_{\Delta_{k-1}(1)}\left(\prod_{j=1}^{k-1}g(kt_{j})^2h_{2}(kt_{j})^{2}\right)
	\notag
	\\
	& \ \ \ \ \ \ \cdot  \left(\int_{(T-\delta_{2})/k}^{T/k}t_{k}k^{2}e^{-kt_{k}}\left(1-\frac{kt_{k}}{T}\right)^2dt_{k}\right)\:\mathrm{d}t_{1}\cdot \cdot \cdot \:\mathrm{d}t_{k-1}
	\notag
	\\
	\leq & \left(1+\frac{1}{\delta_{2}}\right)^2\int_{\Delta_{k-1}(1)}\left(\prod_{j=1}^{k-1}g(kt_{j})^2h_{2}(kt_{j})^{2}\right)
	\notag
	\\
	& \ \ \ \ \ \ \cdot 
	\frac{\delta_{2}}{k}\cdot\frac{T}{k}\cdot k^{2}e^{-(T-\delta_{2})}\left(\frac{\delta_{2}}{T}\right)^2\:\mathrm{d}t_{1}\cdot \cdot \cdot \:\mathrm{d}t_{k-1}
	\notag
	\\
	\ll & \frac{\delta_{2}}{Te^T} \cdot \frac{\Upsilon^{k-1}}{k^{^{k-1}}}.
\end{align}
In the second inequality, we used the trivial bound for the second integral.
\\
We now estimate $\int_{\Delta_{k}(1)}I_{3}^{2}t_{k}\:\mathrm{d}\underline{t}$.
\begin{align} \label{Thiarphcon} 
    \int_{\Delta_{k}(1)}I_{3}^{2}t_{k}\:\mathrm{d}\underline{t} = & \int_{\Delta_{k}(1)}t_{k}k^2h_{1}(\underline{t})^{2}h_{2}(kt_{k})^2g'(kt_{k})^2\prod_{j=1}^{k-1}h_{2}(kt_{j})^{2}g(kt_{j})^2\:\mathrm{d}\underline{t}
	\notag
	\\
	\leq & k^2\int_{0}^{\infty}t_{k}h_{2}(kt_{k})^2g'(kt_{k})^2\:\mathrm{d}t_{k} \prod_{j=1}^{k-1}\int_{0}^{\infty}h_{2}(kt_{j})^{2}g(kt_{j})^2\:\mathrm{d}t_{j}
	\notag
	\\
	\leq &
	k^2\int_{0}^{\frac{T}{k}}t_{k}g'(kt_{k})^2\:\mathrm{d}t_{k}\prod_{j=1}^{k-1}\int_{0}^{\frac{T}{k}}g(kt_{j})^2\:\mathrm{d}t_{j}
	\notag
	\\
	= &
	\frac{\Upsilon^{k-1}}{k^{^{k-1}}} \int_{0}^{T}tg'(t)^2dt.
\end{align}
\\
From the Cauchy-Schwarz inequality, \eqref{Firarphcon}, \eqref{secarphcon}, \eqref{Thiarphcon}, and \eqref{NRg2} we deduce
\begin{align} \label{arphcon1} 
	 \alpha^{(k)}= & \int (I_{1}+I_{2}+I_{3})^2t_{k}\:\mathrm{d}\underline{t}
	\notag
	\\
	= & \int (I_{1}^2+I_{2}^2+I_{3}^2+2I_{1}I_{2}+2I_{1}I_{3}+2I_{2}I_{3})t_{k}\:\mathrm{d}\underline{t}
	\notag
	\\ 
	\leq & \int I_{1}^2t_{k}d\underline{t} + \int I_{2}^2t_{k}\:\mathrm{d} \underline{t} + \int I_{3}^2t_{k}\:\mathrm{d} \underline{t} + 2\left(\int I_{1}^2t_{k}\:\mathrm{d}\underline{t}\right)^{\frac{1}{2}} \left(\int I_{2}^2t_{k}\:\mathrm{d}\underline{t}\right)^{\frac{1}{2}}
	\notag
	\\ 
	 & + 
	2\left(\int I_{1}^2t_{k}\:\mathrm{d}\underline{t}\right)^{\frac{1}{2}} \left(\int I_{3}^2t_{k}\:\mathrm{d}\underline{t}\right)^{\frac{1}{2}}
	+2\left(\int I_{2}^2t_{k}\:\mathrm{d}\underline{t}\right)^{\frac{1}{2}} \left(\int I_{3}^2t_{k}\:\mathrm{d}\underline{t}\right)^{\frac{1}{2}}
	\notag
	\\ 
	\leq &
	\frac{\Upsilon^{k-1}}{k^{k-1}} \int_{0}^{T}tg'(t)^2\:\mathrm{d}t +O\left( \left( \frac{1}{k\delta_{1}}+  \sqrt{\frac{\delta_{2}}{k\delta_{1}Te^T}} +\frac{1}{\sqrt{k\delta_{1}}}  +\sqrt{\frac{\delta_{2}}{Te^T}}  \right)  \frac{\Upsilon^{k-1}}{k^{^{k-1}}}\right).
\end{align}
Now, selecting $\delta_{1}=\frac{\sqrt{\log k}}{k}$ and observing that $T= \frac{k}{\log \log k}$ and $\delta_{2} < 1$ gives that
\begin{align} \label{arphcon} 
	\alpha^{(k)} \leq \frac{\Upsilon^{k-1}}{k^{^{k-1}}} \int_{0}^{T}tg'(t)^2\:\mathrm{d}t  +O\left( \frac{1}{(\log k)^{\frac{1}{4}}} \cdot \frac{\Upsilon^{k-1}}{k^{^{k-1}}}\right).
\end{align}
\vspace{7px}

 \subsection{An upper bound for $\beta_{1}^{(k)}$ and $\beta_{2}^{(k)}$}
 \
 \vspace{5px}
 \\
 Recall 
 \begin{align}  \label{defbeta1} 
 	\beta_{1}^{(k)} =  \int_{\Delta_{k}(1)}t_{k}^{2}(F^{(\underline{1}+e_{k})}(\underline{t}))^{2}\:\mathrm{d} \underline{t}
 	= \int_{\Delta_{k}(1)}t_{k}^{2}(I_{1}+I_{2}+I_{3})^{2}\:\mathrm{d}\underline{t}.
 \end{align}
By an analogous argument as in the estimation of  $\alpha^{(k)}$, it is not hard to see the main contribution of right hand side of \eqref{defbeta1}  comes from $\int_{\Delta_{k}(1)}I_{3}^{2}t_{k}^{2}\:\mathrm{d}\underline{t}$. Hence, 
\begin{align}  \label{beta1con} 
	\beta_{1}^{(k)} \ll & \int_{\Delta_{k}(1)}I_{3}^{2}t_{k}^{2}\:\mathrm{d}\underline{t}
	\notag
	\\	
	= & \int_{\Delta_{k}(1)}t_{k}^{2}k^2h_{1}(\underline{t})^{2}h_{2}(kt_{k})^2g'(kt_{k})^2\prod_{j=1}^{k-1}h_{2}(kt_{j})^{2}g(kt_{j})^2\:\mathrm{d}\underline{t}
	\notag
	\\
	\leq & \frac{\Upsilon^{k-1}}{k^{k}} \int_{0}^{T}t^{2}g'(t)^2\:\mathrm{d}t \ll  \frac{\Upsilon^{k-1}}{k^{k}}.
\end{align}
In the last step we used the fact $\int_{0}^{T}t^{2}g'(t)^2\:\mathrm{d}t = O(1),$ as $k \rightarrow \infty$. This can be readily seen by noting that the integrand is an exponentially decreasing function.
\\
Similarly, we have 
\begin{align}  \label{defbeta2} 
	\beta_{2}^{(k)} =  \int_{\Delta_{k}(1)}t_{k}F^{(\underline{1}+e_{k})}(\underline{t})F^{(\underline{1})}(\underline{t})\:\mathrm{d} \underline{t} =\int_{\Delta_{k}(1)}t_{k}(I_{1}+I_{2}+I_{3})F^{(\underline{1})}(\underline{t})\:\mathrm{d}\underline{t}.
\end{align}
The main contribution of the right hand side of \eqref{defbeta2} comes from $\int_{\Delta_{k}(1)}t_{k} I_{3} F^{(\underline{1})}(\underline{t}) \:\mathrm{d}\underline{t}$. Therefore,
 \begin{align}  \label{beta2con} 
 	\beta_{2}^{(k)} \ll & \int_{\Delta_{k}(1)}t_{k} I_{3} F^{(\underline{1})}(\underline{t}) \:\mathrm{d}\underline{t}
 	\notag
 	\\	
 	= & \int_{\Delta_{k}(1)}t_{k}kh_{1}^{2}(\underline{t})h_{2}(kt_{k})^2g'(kt_{k})g(kt_{k})\prod_{j=1}^{k-1}h_{2}(kt_{j})^{2}g(kt_{j})^2\:\mathrm{d}\underline{t}
 	\notag
 	\\
 	\leq & \frac{\Upsilon^{k-1}}{k^{k}} \int_{0}^{T}tg'(t)g(t)\:\mathrm{d}t
 	\ll \frac{\Upsilon^{k-1}}{k^{k}}.
 \end{align}

\subsection{Lower bound for $I(F)$}
\
\vspace{5px}
\\
In this subsection we derive a lower bound for $I(F)$. Our argument is inspired by {\cite[Section 7]{J2015}}. 

\begin{proposition}  \label{proIF}  For every $\epsilon > 0$, there exists $\delta_{2}= \delta_{2}(\epsilon) >0$, such that
	\begin{align}  \label{LbIF} 
		I(F) > \frac{\Upsilon^{k-1}}{k^{k}} & \left( 1- \frac{T}{k(1-T/k-\mu)^{2}} \right)  \int_{0}^{\infty} g(u)^{2} \:\mathrm{d} u -\epsilon 
		\notag
		\\
		& + O\left( \frac{e^{\sqrt{\log k}}(\log k)^{\frac{9}{2}}}{\sqrt{k-1}} \frac{\Upsilon^{k-1}}{k^{k}} \right),
	\end{align}
where
\begin{align*}
	\mu =\frac{\int_{0}^{\infty}u g(u)^{2}\:\mathrm{d}u}{\int_{0}^{\infty} g(u)^{2} \:\mathrm{d}u} \ \ \ \ \ \mbox{and} \ \ \ \ \ \Upsilon =\int_{0}^{T}g(t)^2 \:\mathrm{d}t.
\end{align*}
\end{proposition}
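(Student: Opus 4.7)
The plan is to bound $I(F)$ from below by restricting to a region on which the smooth cutoffs $h_{1}$ and $h_{2}$ equal $1$, so that $F^{(\underline{1})}$ reduces to the product $\prod_{j} g(kt_{j})$, and then to control the discarded mass via a second-moment estimate. Since $h_{1}(\underline{t})^{2} \geq \mathbf{1}_{\Delta_{k}(1-\delta_{1})}(\underline{t})$ and $h_{2}(u)^{2} \geq \mathbf{1}_{[0,T-\delta_{2}]}(u)$, formula \eqref{deF} gives
\begin{equation*}
I(F) \geq \int_{\Delta_{k}(1-\delta_{1}) \cap \{kt_{j} \leq T-\delta_{2} \text{ for all } j\}} \prod_{j=1}^{k} g(kt_{j})^{2} \:\mathrm{d}\underline{t} = \frac{1}{k^{k}} \int_{\{\sum u_{j} \leq k(1-\delta_{1})\} \cap [0, T-\delta_{2}]^{k}} \prod_{j=1}^{k} g(u_{j})^{2} \:\mathrm{d}\underline{u}
\end{equation*}
after the substitution $u_{j}=kt_{j}$.

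Next, I would express the right-hand integral as the full product integral on $[0,T-\delta_{2}]^{k}$ minus the contribution of the tail $\{\sum u_{j} > k(1-\delta_{1})\}$. The product piece equals $\bigl(\int_{0}^{T-\delta_{2}} g(u)^{2}\:\mathrm{d}u\bigr)^{k}$, and since $g(u)^{2} \ll e^{-T}(\delta_{2}/T)^{2}$ on $[T-\delta_{2}, T]$, this differs from $\Upsilon^{k}$ by a super-exponentially small amount that, for any given $\epsilon>0$, can be absorbed into the $-\epsilon$ summand by choosing $\delta_{2} = \delta_{2}(\epsilon)$ sufficiently small. For the tail, I view $g(u)^{2}/\Upsilon$ as the density of i.i.d.\ random variables $U_{1},\ldots,U_{k}$ supported on $[0,T]$ with mean $\mu$ and variance $\sigma^{2}$. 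Because the choice $\delta_{1}=\sqrt{\log k}/k$ from Section 5.3 satisfies $\delta_{1} \ll 1/\log\log k = T/k$, one has $\{\sum U_{j} > k(1-\delta_{1})\} \subset \{\sum U_{j} > k-T\}$, so Chebyshev's inequality applied to $\sum U_{j}$ (which has mean $k\mu$ and variance $k\sigma^{2}$) yields
\begin{equation*}
\mathrm{Pr}\Bigl[\sum_{j} U_{j} > k(1-\delta_{1})\Bigr] \leq \frac{k\sigma^{2}}{\bigl(k(1-T/k-\mu)\bigr)^{2}} = \frac{\sigma^{2}}{k(1-T/k-\mu)^{2}}.
\end{equation*}
The crude variance bound $\sigma^{2} \leq \mathbb{E}[U_{1}^{2}] \leq T\,\mathbb{E}[U_{1}] = T\mu \leq T$ (using $U_{1} \leq T$) then produces exactly the factor $T/(k(1-T/k-\mu)^{2})$ displayed in the proposition; combining everything and rewriting $\Upsilon^{k} = \Upsilon^{k-1}\int_{0}^{\infty} g(u)^{2}\:\mathrm{d}u$ gives the stated bound.

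The hard part will be the quantitative bookkeeping of the residual terms. One must keep uniform control of the full product $\bigl(\int_{0}^{T-\delta_{2}} g^{2}\bigr)^{k}$, of the $\delta_{1}$-shell $\Delta_{k}(1)\setminus\Delta_{k}(1-\delta_{1})$ on which $h_{1}^{2}<1$ (whose volume is of order $\delta_{1}/(k-1)!$ by a standard slicing argument), and of the interaction between these restrictions and the Chebyshev tail bound. With $\delta_{1}=\sqrt{\log k}/k$, applying Stirling's formula to $(k-1)!$ and pairing it with the peak behaviour of the $g^{2}$-density on the relevant boundary face of the simplex is what produces the explicit correction $e^{\sqrt{\log k}}(\log k)^{9/2}/\sqrt{k-1}$ absorbed into the big-$O$ term.
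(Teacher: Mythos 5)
Your argument is correct and in fact slightly cleaner than the paper's. The paper proceeds by first using Lebesgue dominated convergence to discard $h_2$ (producing the $-\epsilon$ term), then writing $h_1^2 \geq 1 - \mathbf{1}_{\Delta_k(1)\setminus\Delta_k(1-\delta_1)}$ to work on the full simplex $\Delta_k(1)$ at the cost of a separate ``shell'' integral $\int_{\Delta_k(1)\setminus\Delta_k(1-\delta_1)}\prod_j g(kt_j)^2\,\mathrm{d}\underline{t}$, which is bounded using the crude estimate $g(t)\leq e^{-t/2}$, a slicing argument for the volume $\delta_1/(k-1)!$, Stirling's formula, and the lower bound $\Upsilon^{k-1}\geq(\log k)^{-4}$; this is exactly the source of the correction $e^{\sqrt{\log k}}(\log k)^{9/2}/\sqrt{k-1}$ in the proposition. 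The second-moment (Chebyshev-type) bound is then applied separately, after restricting $t_1\leq T/k$, to the sum $\sum_{j\geq 2}u_j$ at threshold $k-T$. You instead lower-bound $h_1^2\geq\mathbf{1}_{\Delta_k(1-\delta_1)}$ and $h_2^2\geq\mathbf{1}_{[0,T-\delta_2]}$ directly, so the only loss is the single complementary region $\{\sum u_j>k(1-\delta_1)\}$, and you absorb this entirely into the Chebyshev bound via the observation $k(1-\delta_1)>k-T$ (valid since $\delta_1=\sqrt{\log k}/k\ll T/k$). The endgame is identical: $\sigma^2\leq T\mu\leq T$, giving the same factor $T/(k(1-T/k-\mu)^2)$, and $(\int_0^{T-\delta_2}g^2)^k\to\Upsilon^k$ as $\delta_2\to 0$ furnishes the $-\epsilon$.

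What your route buys you is that the Stirling/shell estimate is genuinely unnecessary: since your tail already covers $\Delta_k(1)\setminus\Delta_k(1-\delta_1)$ and the Chebyshev bound $\Upsilon^{k}T/(k^{k+1}(1-T/k-\mu)^2)$ is of strictly larger order than $e^{\sqrt{\log k}}(\log k)^{9/2}\Upsilon^{k-1}/(\sqrt{k-1}\,k^k)$, you obtain a bound without the $O(\cdot)$ term at all, which is stronger than (and hence implies) the stated proposition. Your last paragraph, which speaks of still needing to control the $\delta_1$-shell via Stirling and the peak behaviour of $g^2$, is therefore a red herring and signals some confusion about your own argument: in your decomposition there is no residual shell to estimate, and re-introducing it would just be re-deriving the paper's weaker bound. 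Two small points worth tidying: the positivity $1-T/k-\mu>0$ (needed for Chebyshev to apply) is taken for granted and should be cited or checked as in \eqref{promu}--\eqref{promucon}; and the displayed ``$=$'' in the Chebyshev line should be a ``$\leq$''.
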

\begin{proof} Recall the definition of $I(F)$ and our test function $F$, we have
	\begin{align*}
		I(F)= \int_{\Delta_{k}(1)}(F^{(\underline{1})}(\underline{t}))^{2}\:\mathrm{d}\underline{t}
		= \int_{\Delta_{k}(1)} h_{1}(\underline{t})^{2}\prod_{j=1}^{k}h_{2}(kt_{j})^{2}g(kt_{j})^{2} \:\mathrm{d}\underline{t}.
	\end{align*}
In view of Lebesgue's dominated convergence theorem, for every $\epsilon >0$, we can take $\delta_{2}= \delta_{2}(\epsilon)$ sufficiently small, such that
\begin{equation}  \label{LebIF} 
	I(F) > \int_{\Delta_{k}(1)} h_{1}(\underline{t})^{2}\prod_{j=1}^{k}g(kt_{j})^{2} \:\mathrm{d}\underline{t} -\epsilon.
\end{equation}
By the definition of $h_{1}(\underline{t})$, it follows that
\begin{align}  \label{Lebh1IF} 
	I(F) > \int_{\Delta_{k}(1)} \prod_{j=1}^{k}g(kt_{j})^{2} \:\mathrm{d}\underline{t} - \int_{\Delta_{k}(1) \backslash \Delta_{k}(1- \delta_{1})} \prod_{j=1}^{k}g(kt_{j})^{2} \:\mathrm{d}\underline{t} -\epsilon.
\end{align}
Thus, to prove \eqref{LbIF}, it suffices to establish 
\begin{align}  \label{BIF1} 
	\int_{\Delta_{k}(1)} \prod_{j=1}^{k}g(kt_{j})^{2} \:\mathrm{d}\underline{t}  \geq \frac{\Upsilon^{k-1}}{k^{k}}  \left( 1- \frac{T}{k(1-T/k-\mu)^{2}} \right)  \int_{0}^{\infty} g(u)^{2} \:\mathrm{d} u,
\end{align}
and
\begin{align} \label{BIF2} 
	\int_{\Delta_{k}(1) \backslash \Delta_{k}(1- \delta_{1})} \prod_{j=1}^{k}g(kt_{j})^{2} \:\mathrm{d}\underline{t} \ll \frac{e^{\sqrt{\log k}}(\log k)^{\frac{9}{2}}}{\sqrt{k-1}} \frac{\Upsilon^{k-1}}{k^{k}}.
\end{align}
We begin with \eqref{BIF2}. It is clear that
\begin{align} \label{BIF2sp1} 
	\int_{\Delta_{k}(1) \backslash \Delta_{k}(1- \delta_{1})} \prod_{j=1}^{k}g(kt_{j})^{2} \:\mathrm{d}\underline{t}
	= & \int_{\substack{\Delta_{k}(1) \backslash \Delta_{k}(1- \delta_{1}) \\ \underline{t} \in [0, T/k]^{k}}} \prod_{j=1}^{k} e^{-kt_{j}}\Big(1-\frac{kt_{j}}{T}\Big)^2 \:\mathrm{d}\underline{t}
	\notag
	\\
	\leq & \int_{\Delta_{k}(1) \backslash \Delta_{k}(1- \delta_{1})} e^{-k (\sum_{j=1}^{k} t_{j})} \:\mathrm{d}\underline{t}
	\notag
	\\
	\leq & e^{-k (1 - \delta_{1})} \int_{\Delta_{k}(1) \backslash \Delta_{k}(1- \delta_{1})} 1 \:\mathrm{d}\underline{t}
	\notag
	\\
	\leq & e^{-k (1 - \delta_{1})} \int_{\Delta_{k-1}(1)} \int_{1 - \delta_{1}}^{1} \:\mathrm{d} r \:\mathrm{d} t_{1} \cdot \cdot \cdot  \:\mathrm{d} t_{k-1}
	\notag
	\\
	= & \frac{e^{-k (1 - \delta_{1})} \delta_{1}}{(k-1)!}.
	\end{align}
In the penultimate step, we changed the variable by $r = t_{1} + \cdot \cdot \cdot + t_{k}$.
\\
Combining Stirling's formula with \eqref{BIF2sp1}, we obtain
\begin{equation} \label{BIF2sp2} 
	\int_{\Delta_{k}(1) \backslash \Delta_{k}(1- \delta_{1})} \prod_{j=1}^{k}g(kt_{j})^{2} \:\mathrm{d}\underline{t} \ll \frac{e^{k \delta_{1}} \delta_{1}}{\sqrt{k-1}(k-1)^{k-1}}.
\end{equation}
From \eqref{NRg1}, we find
\begin{align} \label{Lbgram} 
	\Upsilon =\int_{0}^{T} g(t)^{2} \:\mathrm{d} t \geq 1 - \frac{2}{T}.
\end{align}
As $T = \frac{k}{\log \log k}$, one has
\begin{align} \label{Lbgramcon} 
	\Upsilon^{k-1} \geq \left( 1- \frac{2 \log \log k}{k} \right)^{k-1} =  e^{(k-1) \log \left( 1- \frac{2 \log \log k}{k} \right)}
	\geq  e^{(k-1) \frac{-4 \log \log k}{k}}
	\geq  \frac{1}{(\log k)^{4}}.
\end{align}
 Combining \eqref{BIF2sp2} with \eqref{Lbgramcon} then gives, as $\delta_{1}= \frac{\sqrt{\log k}}{k}$,
\begin{align} \label{BIF2con} 
	\frac{k^{k}}{\Upsilon^{k-1}}\int_{\Delta_{k}(1) \backslash \Delta_{k}(1- \delta_{1})} \prod_{j=1}^{k}g(kt_{j})^{2} \:\mathrm{d}\underline{t} \ll &
	\frac{e^{k \delta_{1}} \delta_{1} k^{k} (\log k)^{4}}{\sqrt{k-1}(k-1)^{k-1}}
	\ll \frac{e^{\sqrt{\log k}} (\log k)^{\frac{9}{2}}}{\sqrt{k-1}},
\end{align}
and the claim \eqref{LbIF} follows.
\\
\indent Now we show \eqref{BIF1}. Since squares are nonnegative, we restrict the outer integral to $\sum_{j=2}^{k} t_{j} \leq 1- T/k$, 
\begin{align} \label{rwIF1} 
	\int_{\Delta_{k}(1)} \prod_{j=1}^{k}g(kt_{j})^{2} \:\mathrm{d}\underline{t} \geq & \idotsint \limits_{\substack{t_{2}, \cdot \cdot \cdot\ , t_{k} \geq 0\\ \sum_{j=2}^{k} t_{j} \leq 1- T/k}} \int_{0}^{T/k} \prod_{j=1}^{k} g(kt_{j})^{2} \:\mathrm{d}t_{1}  \:\mathrm{d}t_{2} \cdot \cdot \cdot  \:\mathrm{d}t_{k}  
	\notag
	\\
	= & I' - E,
\end{align}
where
\begin{align} \label{FirIF1} 
	I' = & \idotsint \limits_{t_{2}, \cdot \cdot \cdot\ , t_{k} \geq 0} \int_{0}^{T/k} \prod_{j=1}^{k} g(kt_{j})^{2} \:\mathrm{d}t_{1}  \:\mathrm{d}t_{2} \cdot \cdot \cdot  \:\mathrm{d}t_{k}  
	=  \left( \int_{0}^{\infty} g(kt)^{2} \:\mathrm{d}t \right)^{k}
	=  \frac{\Upsilon^{k}}{k^{k}},
\end{align}
\begin{align} \label{SecIF1} 
	E = & \idotsint \limits_{\substack{t_{2}, \cdot \cdot \cdot\ , t_{k} \geq 0\\ \sum_{j=2}^{k} t_{j} > 1- T/k}} \int_{0}^{T/k} \prod_{j=1}^{k} g(kt_{j})^{2} \:\mathrm{d}t_{1}  \:\mathrm{d}t_{2} \cdot \cdot \cdot  \:\mathrm{d}t_{k}  
	\notag
	\\
	= & k^{-k} \left( \int_{0}^{\infty} g(u)^{2} \:\mathrm{d}u \right)
	\idotsint \limits_{\substack{u_{2}, \cdot \cdot \cdot\ , u_{k} \geq 0\\ \sum_{j=2}^{k} u_{j} > k- T}}  \prod_{j=2}^{k} g(u_{j})^{2} \:\mathrm{d}u_{2} \cdot \cdot \cdot  \:\mathrm{d}u_{k}.
\end{align}
We can check the choice of $g$ satisfies
\begin{align} \label{promu} 
	\mu = \frac{\int_{0}^{\infty}u g(u)^{2}\:\mathrm{d}u}{\int_{0}^{\infty} g(u)^{2} \:\mathrm{d}u} < 1 - \frac{T}{k}.
\end{align}
Actually, from \eqref{NRg1} and \eqref{NRg3}, we have
\begin{align} \label{Nrmu} 
	\mu = & \frac{\int_{0}^{\infty}u g(u)^{2}\:\mathrm{d}u}{\int_{0}^{\infty} g(u)^{2} \:\mathrm{d}u} 
	=  \frac{1 + (6 - 4T- 2Te^{-T} -6e^{-T}) T^{-2}}{1-2(T + e^{-T} -1) T^{-2}}
	\notag
	\\
	= & 1 - \frac{2T + 2Te^{-T} + 4e^{-T} -4}{1-2(T + e^{-T} -1) T^{-2}}.
\end{align}
Since $T = \frac{k}{\log \log k}$, it follows
\begin{align} \label{promucon} 
	1- \mu - \frac{T}{k} = \frac{2T + 2Te^{-T} + 4e^{-T} -4}{1-2(T + e^{-T} -1) T^{-2}} - \frac{T}{k} \gg 1.
\end{align}
Let $\varTheta = (k - T)/ (k-1) - \mu >0$. If $\sum_{j=2}^{k} u_{j} > k-T$, then $\sum_{j=2}^{k} u_{j} > (k-1)(\mu + \varTheta)$, and so we have
\begin{align} \label{provartheta} 
	1 \leq \varTheta^{-2} \left( \frac{1}{k-1} \sum_{j=2}^{k} u_{j} - \mu \right)^{2}.
\end{align}
Since the right hand side of \eqref{provartheta} is nonnegative for all $u_{j}$, we can obtain an upper bound for $E$ if we multiply the integrand by $\varTheta^{-2} \left( \sum_{j=2}^{k} u_{j}/(k-1) - \mu \right)^{2}$ and then drop the requirement that $\sum_{j=2}^{k} u_{j} > k -T$. We find
\begin{align} \label{ubE} 
	E \leq \varTheta^{-2} k^{-k} \left( \int_{0}^{\infty} g(u)^{2} \:\mathrm{d}u \right) \int _{0}^{\infty}
	\cdot \cdot \cdot \int _{0}^{\infty} \left( \frac{\sum_{j=2}^{k} u_{j}}{k-1} - \mu \right)^{2} \left( \prod_{j=2}^{k} g(u_{j}) ^{2} \right) \:\mathrm{d}u_{2} \cdot \cdot \cdot  \:\mathrm{d}u_{k}.
\end{align}
Expanding out the inner square and calculating all the terms which are not of the form $u_{j}^{2}$ gives
\begin{align} \label{nondiaterm} 
	\int _{0}^{\infty}
	\cdot \cdot \cdot \int _{0}^{\infty} & \left( \frac{2\sum_{2 \leq i < j \leq k} u_{i} u_{j}}{(k-1)^{2}} - \frac{2 \mu \sum_{j=2}^{k} u_{j}}{k-1}  + \mu^{2} \right) \left( \prod_{j=2}^{k} g(u_{j}) ^{2} \right) \:\mathrm{d}u_{2} \cdot \cdot \cdot  \:\mathrm{d}u_{k}
	\notag
	\\
	 &=  \frac{k-2}{k-1} \mu^{2} \Upsilon^{k-1} - 2 \mu^{2} \Upsilon^{k-1} + \mu^{2}\Upsilon^{k-1} 
	\notag
	\\
	 &=  \frac{- \mu^{2} \Upsilon^{k-1}}{k-1}.\quad \quad \quad \quad \quad \quad \quad \quad \quad \quad \quad \quad \quad \quad \quad 
\end{align}
For the $u_{j}^{2}$ terms, we see that $u_{j}^{2}g(u_{j})^{2} \leq T u_{j} g(u_{j})^{2}$ in view of the support of $g$. Hence,
\begin{align} \label{diaterm} 
	\int _{0}^{\infty}
	\cdot \cdot \cdot \int _{0}^{\infty} u_{j}^{2} \left( \prod_{i=2}^{k} g(u_{i}) ^{2} \right) \:\mathrm{d}u_{2} \cdot \cdot \cdot  \:\mathrm{d}u_{k}
	\leq T \Upsilon^{k-2} \int _{0}^{\infty} u_{j} g(u_{j})^{2} \:\mathrm{d}u_{j}
	=  \mu T \Upsilon^{k-1}.
\end{align}
It follows from \eqref{ubE}, \eqref{nondiaterm}, and \eqref{diaterm} that
\begin{align} \label{BE1} 
	E  & \leq  \varTheta^{-2} k^{-k} \left( \int_{0}^{\infty} g(u)^{2} \:\mathrm{d}u \right) \left( \frac{\mu T \Upsilon^{k-1}}{k-1} - \frac{\mu^{2} \Upsilon^{k-1}}{k-1} \right)
	\notag
	\\
	& \leq \left( \frac{\varTheta^{-2} \mu T k^{-k} \Upsilon^{k-1}}{k-1} \right) \left( \int_{0}^{\infty} g(u)^{2} \:\mathrm{d}u \right).
\end{align}
Since $(k-1) \varTheta^{2} \geq k (1- T/k - \mu)^{2}$ and $\mu \leq 1$, from \eqref{BE1} we obtain
\begin{align} \label{BEcon} 
	E  \leq \left( \frac{T k^{-k-1} \Upsilon^{k-1}}{(1- T/k - \mu)^{2}} \right) \left( \int_{0}^{\infty} g(u)^{2} \:\mathrm{d}u \right).
\end{align}
From \eqref{rwIF1}, \eqref{FirIF1}, \eqref{BEcon} we conclude \eqref{BIF1}. The proof of the proposition is now complete.
\end{proof} 
\vspace{7px}

\subsection{Completion of the proof of Theorem 1.3}

\begin{proof} Recall that $ \mathrm{supp} \  F (\underline{t}) \subset  \Delta_{k}^{[\frac{T}{k}]}(1)$ and $T/k = 1/ \log \log k$. We can find a sequence  $\{ \varpi_{k}\}_{k=0}^{\infty} \subset (0, 55/12756)$ with $\lim_{k} \varpi_{k}=55/12756$ and a real number $K$, such that 
	\begin{align} \label{provarpi} 
		\frac{\eta (\varepsilon_{k})}{2/3+\varpi_{k}} \geq \frac{1}{\log \log k},
	\end{align}
for $k > K$, where $\varepsilon_{k}= 55/12756 - \varpi_{k}$ and the function $\eta (\cdot)$ is defined as in Lemma \ref{12WuXiTH1.2}.
Applying Lemma \ref{lemsum1sum2} with $\varpi = \varpi_{k}$, we get  $S(N, \rho) >0$ for all large $N$, provided 
\begin{align} \label{GoalTh} 
	\rho > \frac{k \alpha^{(k)}}{(\frac{1}{2}(\frac{2}{3}+\varpi_{k})- \delta) I(F)} - \frac{k \beta_{1}^{(k)}}{I(F)}-\frac{4k \beta_{2}^{(k)}}{I(F)},
\end{align}
\\
Plugging the estimates for $\alpha^{(k)}$, $\beta_{1}^{(k)}$, $\beta_{2}^{(k)}$ and $I(F)$ (see \eqref{arphcon}, \eqref{beta1con}, \eqref{beta2con}, and Proposition \ref{proIF}) into the right hand side of \eqref{GoalTh} yields
\begin{align} \label{upprhgoal1} 
	&\frac{k \alpha^{(k)}}{(\frac{1}{2}(\frac{2}{3}+\varpi_{k})- \delta) I(F)} - \frac{k \beta_{1}^{(k)}}{I(F)}-\frac{4k \beta_{2}^{(k)}}{I(F)}
	\notag
	\\
	\leq &  \frac{(\frac{1}{2}(\frac{2}{3}+\varpi_{k})- \delta)^{-1} \frac{\Upsilon^{k-1}}{k^{^{k-2}}} \int_{0}^{T}tg'(t)^2\:\mathrm{d}t  +O\left( \frac{1}{(\log k)^{\frac{1}{4}}} \cdot \frac{\Upsilon^{k-1}}{k^{^{k-2}}}\right)}{\frac{\Upsilon^{k-1}}{k^{k}}  \left( 1- \frac{T}{k(1-T/k-\mu)^{2}} \right)  \int_{0}^{T} g(t)^{2} \:\mathrm{d} t -\epsilon
		 + O\left( \frac{e^{\sqrt{\log k}}(\log k)^{\frac{9}{2}}}{\sqrt{k-1}} \frac{\Upsilon^{k-1}}{k^{k}} \right)}.
\end{align}
We choose $\delta_{2} > 0$ sufficiently small such that this estimate becomes
\begin{align}  \label{upprhgoal2} 
	  \leq
       \frac{(\frac{1}{2}(\frac{2}{3}+\varpi_{k})- \delta)^{-1} \frac{\Upsilon^{k-1}}{k^{^{k-2}}} \int_{0}^{T}tg'(t)^2\:\mathrm{d}t  +O\left( \frac{1}{(\log k)^{\frac{1}{4}}} \cdot \frac{\Upsilon^{k-1}}{k^{^{k-2}}}\right)}{\frac{\Upsilon^{k-1}}{k^{k}}  \left( 1- \frac{T}{k(1-T/k-\mu)^{2}} \right)  \int_{0}^{T} g(t)^{2} \:\mathrm{d} t
	   + O\left( \frac{e^{\sqrt{\log k}}(\log k)^{\frac{9}{2}}}{\sqrt{k-1}} \frac{\Upsilon^{k-1}}{k^{k}} \right)}.
\end{align}
Since \eqref{promucon} and $T/k = 1/ \log \log k$, we have
\begin{align}  \label{lioterm} 
	\frac{T}{k(1-T/k-\mu)^{2}} = o(1), \ \ \ \mbox{as} \ k \rightarrow \infty.
\end{align}
It follows from \eqref{NRg1} and \eqref{NRg2} that
\begin{align}  \label{asyg} 
	\frac{ \int_{0}^{T}tg'(t)^2\:\mathrm{d}t } {\int_{0}^{T} g(t)^{2} \:\mathrm{d} t} \rightarrow \frac{1}{4}, \ \ \ \mbox{as} \ k \rightarrow \infty.
\end{align}
Combining \eqref{upprhgoal2}, \eqref{lioterm}, \eqref{asyg}, and $\lim_{k} \varpi_{k}=55/12756$, gives that
\begin{align}  \label{upprhgoal3} 
	\frac{k \alpha^{(k)}}{(\frac{1}{2}(\frac{2}{3}+\varpi_{k})- \delta) I(F)} - \frac{k \beta_{1}^{(k)}}{I(F)}-\frac{4k \beta_{2}^{(k)}}{I(F)}
	\leq & \frac{ \int_{0}^{T}tg'(t)^2\:\mathrm{d}t } {(\frac{1}{2}(\frac{2}{3}+\varpi_{k})- \delta)\int_{0}^{T} g(t)^{2} \:\mathrm{d} t} k^{2} + o(k^{2})
	\notag
	\\
	= & \frac{1}{\frac{4}{3}+2\varpi_{k}-2\delta}k^{2}+o(k^{2})
	\notag
	\\
	= & \frac{1}{\frac{4}{3}+2 \cdot \frac{55}{12756}-2\delta}k^{2}+o(k^{2}), \ \ \ \mbox{as} \ k \rightarrow \infty.
\end{align}
Since $\delta$ can be made arbitrarily small, we can take 
\begin{align}  \label{thcon} 
	\rho_{k} = \frac{1}{\frac{4}{3}+2 \cdot \frac{55}{12756}}k^{2}+o(k^{2}) = \frac{2126}{2853} k^{2} + o(k^{2}).
\end{align}
Finally, we remark that the number of integers $\leq x$ that satisfy the requirements of this theorem is $\gg x (\log \log x)^{-1} (\log x)^{-k}$. It can be deduced by using the same argument as in {\cite[Theorem 5.13]{M-A2017}}. The proof of Theorem \ref{NewTh} is now complete.
\end{proof}


\begin{thebibliography}{99}  

\bibitem {M-A2017}M. Ram Murty and A. Vatwani,  \emph{A higher rank Selberg sieve with an additive twist and applications,} Funct. Approx. Comment. Math., \textbf{57}(2) (2017),  151--184. 

\bibitem{A2016} A. Vatwani, \emph{A higher rank Selberg sieve and applications,} Czechoslovak Math. J., \textbf{68}(143)(1) (2018), 169--193.

\bibitem{J2015} J. Maynard, \emph{Small gaps between primes,} Ann. of Math. (2) \textbf{181} (2015), no. 1, 383--413.

\bibitem{L-P2015} H. Li and H. Pan, \emph{Bounded gaps between primes of a special form,} Int. Math. Res. Not. (2015), no . 23, 12345--12365.

\bibitem{AJ2015} A. J. Irving, \emph{The divisor function in arithmetic progressions to smooth moduli,} Int. Math. Res. Not. (2015), no. 15, 6675--6698. 

\bibitem{HB1997} D.R. Heath-Brown,  \emph{Almost-prime k-tuples,} Mathematika \textbf{44} (1997), no. 2, 245--266. 

\bibitem{H-T2006} K-H. Ho, K-M, Tsang, \emph{On almost prime k-tuples,} J. Number Theory \textbf{120} (2006), no. 1, 33--46.

\bibitem{H1957} C. Hooley, \emph{An asymptotic formula in the theory of numbers,} Proc. London Math. Soc. (3) \textbf{7} (1957), 396--413.

\bibitem{PM2014} Polymath D.H.J, \emph{Variants of the Selberg sieve, and bounded intervals containing many primes,} Research in the Mathematical sciences \textbf{1} (2014), Art. 12, 83 pp.

\bibitem{Sebook} A. Selberg, \emph{Lectures on Sieves,} Collected Papers, vol. II, Springer-Verlag, Berlin, 1991.

\bibitem{Z2014} Y. Zhang, \emph{Bounded gaps between primes,} Ann. of Math.(2) \textbf{179} (2014), no. 3, 1121–1174.

\bibitem{W-X2021} J. Wu and P. Xi, \emph{Arithmetic exponent pairs for algebraic trace functions. With an appendix by Will Sawin,} Algebra Number Theory.(2) \textbf{15} (2021), no. 9, 2123–2172.
\end{thebibliography}
\end{document}